\newtheorem{theorem}[equation]{Theorem}
\newtheorem{lemma}[equation]{Lemma}
\newtheorem{proposition}[equation]{Proposition}
\newtheorem{definition}[equation]{Definition}
\newtheorem{corollary}[equation]{Corollary}
\newtheorem{remark}[equation]{Remark}
\title{Neck Detection for Two-Convex Hypersurfaces Embedded in Euclidean Space undergoing Brendle-Huisken G-Flow}
\begin{document}
\author{Alexander Majchrowski} 
\date{}
\maketitle
\numberwithin{equation}{section}
\newcommand{\map}[2]{\,{:}\,#1\!\longrightarrow\!#2}
\newcommand{\defn}[1]{\textbf{#1}}
\newcommand{\newln}{\\&\quad\quad{}}
\renewcommand{\thefootnote}{\fnsymbol{footnote}}

	\begin{abstract}
	Recently Brendle-Huisken introduced a fully nonlinear flow $G$, \cite{brendle2015fully}. Their aim was to extend the surgery algorithm of Huisken-Sinestrari, \cite{huisken2009mean} into the Riemannian setting. The aim of this paper is to go through the details on how to perform neck detection for a closed, embedded hypersurface $M_0$ in $\mathbb{R}^{n+1}$ undergoing this $G$-flow.
	In order to do this we need to make some adjustments to Brendle and Huiskens gradient estimate from \cite{brendle2015fully}, after we have done this we can go on to argue as in \cite{huisken2009mean}, in order to classify two-convex surfaces undergoing $G$-flow.
	\end{abstract}

	\section{Introduction}
	
	In $1984$ Gerhard Huisken showed that for an $n$-dimensional ($n\geq2$) compact, uniformly convex surface without boundary smoothly embedded in Euclidean space, $\mathbb{R}^{n+1}$ undergoing mean curvature flow will converge to a round point in finite time. The normalised equation will converge to a sphere as $t\to\infty$, \cite{huisken1984flow}.
	
	In $1986$ he was able to prove the same result for a surface $M^n$, $n
	\geq2$ embedded in a Riemannian manifold $N^{n+1}$, however the requirements for this to happen were much stricter. We had to impose conditions on the sectional curvature of our surface, on the covariant derivative of the curvature tensor and the injectivity radius on our Riemannian manifold $N^{n+1}$, as well as having a stronger pinching condition on $\mathcal{M}_0$.
	
	The next natural step was to study $2$-convex hypersurfaces embedded in Euclidean space. This was done by Huisken-Sinestrari in 2009, \cite{huisken2009mean} using their surgery algorithm. They were able to show that any such surgery would terminate after a finite number of steps and that $\mathcal{M}_0$ was diffeomorphic to either $S^n$ or some finite connected sum of $S^{n-1}\times S^1$.
	
	Ideally one would then go on to prove a similar result for mean curvature flow of $2$-convex hypersurfaces embedded in a Riemannian Manifold. However in this setting $2$-convexity is not preserved by the flow. Inspired by Andrews work on harmonic mean curvature flow, \cite{andrews1993contraction}, Brendle-Huisken introduced the following flow which has the advantage of preserving $2$-convexity in the Riemannian setting.

	Fixing $n\geq 3$  consider a closed, embedded hypersurface $M_0$ in $\mathbb{R}^{n+1}$. $M_0$ is $\kappa$-$2$-convex if $\lambda_1+\lambda_2\geq 2\kappa$, where $\lambda_1\leq\cdots\leq\lambda_n$ denote the principal curvatures. We evolve $M_0$ with normal velocity
	\begin{align*}
	G_\kappa=\left(\sum_{i<j}\frac{1}{\lambda_i+\lambda_j-2\kappa}\right)^{-1}.
	\end{align*}
	We call this Brendle-Huisken G-flow, but will refer to it as G-flow in later sections.
	Brendle-Huisken were able to extend the surgery algorithm of Huisken-Sinestrari to this fully nonlinear flow in both the Euclidean setting and Riemannian setting. In order to do this they obtained a convexity estimate, cylindrical estimate and gradient estimate for this flow. 
	
   In the Euclidean setting we take $\kappa=0$. In order to argue as in Section 7, \cite{huisken2009mean} for this flow, adjustments have to be make for the gradient estimate. As we are not able to integrate these estimates in their current form to obtain results related to our backward parabolic neighbourhoods being surgery free as in \cite{huisken2009mean}. This is crucial in our proof of the neck detection lemma. Some small adjustments were also required in our proof of the neck detection lemma for this setting.
	
   After making these changes we can follow Section 7 of \cite{huisken2009mean} to obtain the other necessary results for when certain conditions in the neck detection lemma are not met. Firstly the we may not know that the backward parabolic neighbourhood about a point is surgery free, in this case we can obtain the required result as long as the curvature at our point is large enough compared to the curvature of the regions changed by previous surgeries. We must also deal with the case when $\frac{\lambda_1}{G}$ is not small, however the proof here does not rely on gradient estimates and is instead a general property of hypersurfaces as shown by Huisken-Sinestrari in \cite{huisken2009mean}.
   
   Putting this all together we obtain our main theorem.
   
   \begin{theorem}
   	Let $\mathcal{M}_t$ be a smooth Brendle-Huisken G-flow of a closed, compact $2$-convex hypersurface. Given our neck parameters, there exists a constant $G^*$ depending on $\mathcal{M}_0$ such that if $G_{\max}(t_0)\geq G^*$, then the hypersurface at time $t_0$ either contains an $(\epsilon,k,L)$-hypersurface neck or is convex.
   \end{theorem}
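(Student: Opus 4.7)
The proof will follow the scheme of Theorem 7.14 of \cite{huisken2009mean}, with the $G$-flow specific adjustments built into the earlier sections of this paper. The plan is to argue by contradiction.

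Suppose the conclusion fails, so there is a sequence $G_k^*\to\infty$, times $t_k$ with $G_{\max}(t_k)\geq G_k^*$, yet each $\mathcal{M}_{t_k}$ contains no $(\epsilon,k,L)$-neck and is not convex. Let $p_k$ be a point where $G$ attains its maximum on $\mathcal{M}_{t_k}$. The argument will split on the value of the pinching quantity $\lambda_1/G$ at $p_k$.

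\emph{Case 1: $\lambda_1/G$ is small at $p_k$.} Here the adapted neck detection lemma is the main tool, but its hypotheses must be verified. If the backward parabolic neighbourhood of $p_k$ at the appropriate scale is surgery-free, neck detection directly produces an $(\epsilon,k,L)$-neck at $p_k$, contradicting the assumption. This step will rely crucially on the modified gradient estimate established earlier, since Brendle-Huisken's original gradient estimate is not in a form that can be integrated along backward parabolic neighbourhoods. If surgeries do intersect that neighbourhood, the alternate form of neck detection applies instead: provided $G(p_k)$ is large enough compared to the curvature introduced by previous surgeries (which is bounded a priori in terms of $\mathcal{M}_0$), the same neck conclusion follows. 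Choosing $G^*$ sufficiently large guarantees this curvature gap.

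\emph{Case 2: $\lambda_1/G$ is not small at $p_k$.} This case is surgery-insensitive and will not require gradient estimates. We invoke the general geometric property of two-convex hypersurfaces due to Huisken-Sinestrari \cite{huisken2009mean}. Combined with the Brendle-Huisken convexity and cylindrical estimates, this property forces $\mathcal{M}_{t_k}$ to be convex once $G^*$ is taken large enough, again contradicting the standing assumption.

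The main technical obstacle will be the handling of the surgery-free hypothesis in Case 1, which is aggravated in the $G$-flow setting because Brendle-Huisken's gradient estimate cannot be integrated directly. Overcoming this requires both the modified gradient estimate and a careful bookkeeping of how close the backward parabolic neighbourhood of $p_k$ can come to regions modified by previous surgeries; the bulk of the work is packaging these pieces so that the Huisken-Sinestrari Section 7 argument goes through essentially verbatim.
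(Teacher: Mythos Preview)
Your proposal overcomplicates the argument and mishandles both cases.

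First, the statement is about a \emph{smooth} $G$-flow: there are no surgeries at all. Your entire discussion in Case~1 about what happens ``if surgeries do intersect that neighbourhood,'' about the ``alternate form of neck detection,'' and about ``bookkeeping of how close the backward parabolic neighbourhood of $p_k$ can come to regions modified by previous surgeries'' is irrelevant here. The paper's proof observes in one line that (ND2) holds trivially because the flow is smooth, and then applies the neck detection lemma directly. The modified gradient estimate matters for \emph{proving} the neck detection lemma and Lemma~\ref{Ghash}, but once those are in hand, no further surgery analysis is needed for this corollary.

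Second, your Case~2 is where the real gap lies. Theorem~\ref{Glambdabig} does \emph{not} say that if $\lambda_1/G$ is bounded below at $p_k$ then the surface is convex. It says that \emph{either} the surface is convex (with $\lambda_1>\eta_0 G$ everywhere), \emph{or} there exists a nearby point $q$ with $\lambda_1(q)\leq\eta_0 G(q)$ and $G(q)\geq G(p_k)/\gamma_0$. In the second alternative you must feed $q$ back into Case~1 and run neck detection at $q$, not at $p_k$. Your sentence ``this property forces $\mathcal{M}_{t_k}$ to be convex once $G^*$ is taken large enough'' is simply false as stated; the convexity and cylindrical estimates play no role in forcing that conclusion. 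The correct logic is: pick $\eta_0,G_0$ from the neck detection lemma for the given $(\epsilon,k,L)$ (and some $\theta$); feed this $\eta_0$ into Theorem~\ref{Glambdabig} to get $\alpha_0,\gamma_0$; set $G^*\geq\gamma_0\max\{G_0,G^\#\}$. Then at a point of maximal $G\geq G^*$, either $\lambda_1/G\leq\eta_0$ already and neck detection applies, or Theorem~\ref{Glambdabig} supplies either global convexity or a point $q$ where neck detection applies.
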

   
    We will assume that the reader is comfortable with the notation and definitions regarding necks. If not, please refer to \cite{hamilton1986four}, \cite{huisken2009mean} or \cite{majchrowski}.
   
   Then the arguments from Section 8 of \cite{huisken2009mean} carry over unchanged and we  can obtain results relating to the existence and classification of surgically modified flows.
   
    There are currently many open questions regarding this  $G$-flow. I am also currently working on adapting Joseph Lauer and John Head's method to show that this $2$-convex $G$-flow with surgeries converges to the weak solution of a level set flow for $G$ as we take our surgery parameter to infinity, \cite{majchrowskiconvergence}.
    
    Ancient solutions, solutions which are defined for all negative times have not been studied either, in the convex or $2$-convex case.
    
    Another option to consider would be to start putting details together on a suitable algorithm with estimates for dealing with $3$-convex flow with surgeries for the $G$-flow, which in this case would be
    \begin{align*}
    G_{\kappa}=\sum_{i<j<k}\left(\frac{1}{\lambda+i+\lambda_j+\lambda_k-3\kappa}\right)^{-1}.
    \end{align*} 
    
    This last problem would be hardest to work with, as no surgery algorithm for the $3$-convex case currently exists for mean curvature flow or Ricci flow.

	\vspace{5pt}	 
	\noindent\textbf{Acknowledgements}:
	I would like to thank Gerhard Huisken for introducing and advising me to work on this fully nonlinear flow $G$. 
	
	I would also like to thank Xu-Jia Wang for giving me an opportunity to present a talk on this topic at the Geometric and Nonlinear Partial Differential Equations Conference at Murramarang.
	
	Last but not least, I would like to thank my supervisor Zhou Zhang and the School of Mathematics at The University of Sydney for their help and support.
	\section{Evolution Equations and Necessary Estimates}
	
	In this section we go over some preliminary results obtained from \cite{brendle2015fully}, we will need the evolution equation for $G$, convexity estimate, cylindrical estimate, as well as our new gradient estimate which allows us to control the size of the curvature in the neighbourhood of a given point.
	
	Firstly we give some introductory results regarding  $G$-flow, stated by Brendle-Huisken in \cite{brendle2015fully}.
	
    \begin{proposition}\label{prop G}
    	Given $G$ as above we have the following properties:
    	\begin{enumerate}[(i)]
    		\item $G_{\kappa}\leq C_1H$, where $C_1=C_1(n)>0$.\\
    		\item $\frac{\partial G_{\kappa}}{\partial h_{ij}}\leq g_{ij}$, where $C_2=C_2(n)>0$.\\
    	\end{enumerate}
    \end{proposition}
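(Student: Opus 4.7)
The plan is to treat both parts as elementary consequences of the harmonic-mean structure of $G_\kappa$. Throughout, we work at a point where $h_{ij}$ has been diagonalised with eigenvalues $\lambda_1 \leq \cdots \leq \lambda_n$, so that each pairwise sum $a_{ij}:=\lambda_i+\lambda_j-2\kappa$ is non-negative by $2$-convexity, and indeed strictly positive away from the degenerate boundary case (we may assume this by approximation).

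For (i), I would apply the AM--HM inequality to the $\binom{n}{2}$ positive numbers $a_{ij}$:
\begin{align*}
\binom{n}{2}\,G_\kappa \;=\; \frac{\binom{n}{2}}{\sum_{i<j}\frac{1}{a_{ij}}} \;\leq\; \frac{1}{\binom{n}{2}}\sum_{i<j}a_{ij}.
\end{align*}
Since each index $k$ appears in exactly $n-1$ pairs, $\sum_{i<j}(\lambda_i+\lambda_j) = (n-1)H$, so $\sum_{i<j}a_{ij} = (n-1)H - 2\kappa\binom{n}{2}$. Assuming $\kappa\geq 0$ (as is the case in the Euclidean setting where $\kappa=0$, and guaranteed by the $\kappa$-$2$-convexity normalisation otherwise), we drop the $\kappa$ term and obtain $G_\kappa \leq \frac{(n-1)}{\binom{n}{2}^2} H$, which yields the constant $C_1=C_1(n)$.

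For (ii), I would compute the derivative directly. Writing $F := G_\kappa^{-1} = \sum_{i<j}a_{ij}^{-1}$, we have $G_\kappa = F^{-1}$, so at the diagonal point
\begin{align*}
\frac{\partial G_\kappa}{\partial \lambda_k} \;=\; -F^{-2}\,\frac{\partial F}{\partial \lambda_k} \;=\; G_\kappa^{2}\sum_{j\neq k}\frac{1}{a_{jk}^{2}}.
\end{align*}
The key observation is that because every $a_{ij}^{-1}$ is one non-negative summand of $F = G_\kappa^{-1}$, we have $a_{jk}\geq G_\kappa$ for all $j\neq k$. Hence $a_{jk}^{-2}\leq G_\kappa^{-1} a_{jk}^{-1}$, and summing over $j\neq k$ gives
\begin{align*}
\sum_{j\neq k}\frac{1}{a_{jk}^{2}} \;\leq\; \frac{1}{G_\kappa}\sum_{j\neq k}\frac{1}{a_{jk}} \;\leq\; \frac{1}{G_\kappa}\cdot\frac{1}{G_\kappa} \;=\; G_\kappa^{-2},
\end{align*}
since the $n-1$ terms $a_{jk}^{-1}$ form a subset of the $\binom{n}{2}$ terms summing to $G_\kappa^{-1}$. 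Substituting back yields $\partial G_\kappa/\partial\lambda_k \leq 1$ for each $k$, and since in a diagonalising frame $\partial G_\kappa/\partial h_{ij}$ is itself diagonal, this gives the matrix inequality $\partial G_\kappa/\partial h_{ij} \leq g_{ij}$.

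There is no real obstacle here: both items are algebraic identities once one uses the AM--HM inequality and the trivial comparison $a_{jk}\geq G_\kappa$. The only point requiring minor care is the justification for differentiating the symmetric function $G_\kappa$ with respect to $h_{ij}$ via its eigenvalues, which is standard for smooth symmetric functions of the shape operator (see, e.g., Gerhardt's formula) and reduces the matrix inequality to the scalar bound on $\partial G_\kappa/\partial\lambda_k$.
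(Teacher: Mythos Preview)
Your proof is correct and follows essentially the same route as the paper. For (i) the paper simply writes ``Clear'' while you supply the AM--HM argument; for (ii) both you and the paper compute $\partial G_\kappa/\partial\lambda_k = G_\kappa^{2}\sum_{j\neq k}a_{jk}^{-2}$ and bound the sum by $G_\kappa^{-2}$ using that the $n-1$ terms $a_{jk}^{-1}$ form a subset of the $\binom{n}{2}$ positive terms summing to $G_\kappa^{-1}$ (the paper phrases this as $\sum_{j\neq k} a_{jk}^{-2}\leq(\sum_{j\neq k} a_{jk}^{-1})^2$, you as $a_{jk}^{-2}\leq G_\kappa^{-1}a_{jk}^{-1}$, which are equivalent in effect).
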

    
    \begin{proof}
    	\begin{enumerate}[(i)]
    		\item Clear.
    		\item This is equivalent to observing $\frac{\partial G}{\partial \lambda_i}$ being bounded for each $i$..
    		\begin{align*}
    		\frac{\partial G}{\partial \lambda_i}&=\frac{\partial}{\partial \lambda_i}(\frac{1}{\lambda_1+\lambda_i}+\cdots+\frac{1}{\lambda_{i-1}+\lambda_i}+\frac{1}{\lambda_{i+1}+\lambda_i}+\cdots+\frac{1}{\lambda_n+\lambda_i})^{-1}\\
    		&=-1(\frac{1}{\lambda_1+\lambda_i}+\cdots+\frac{1}{\lambda_n+\lambda_i})^{-2}\times(\frac{-1}{(\lambda_1+\lambda_i)^2}+\cdots+\frac{-1}{(\lambda_1+\lambda_i)^2})\\
    		&\leq(\frac{1}{\lambda_1+\lambda_i}+\cdots+\frac{1}{\lambda_n+\lambda_i})^{-2}\times(\frac{1}{\lambda_1+\lambda_i}+\cdots+\frac{1}{\lambda_1+\lambda_i})^2\\
    		&=1
    		\end{align*}
    		where in the 2nd last step we have used $2$-convexity
    	\end{enumerate}
    \end{proof}			
    
    Now we are able to obtain the necessary evolution equations for $G$.		
	
	\begin{lemma}
		If $\mathcal{M}_t$ evolves by G-flow, the associated quantities above satisfy the following equations:
			\begin{enumerate}[(i)]
				\item $\frac{\partial}{\partial t}g_{ij}=-2Gh_{ij}$ \label{evolg}
				\item $\frac{\partial}{\partial t}g^{ij}=2Gh^{ij}$ \label{invevolg}
				\item $\frac{\partial}{\partial t}\nu=\nabla G$ \label{evolnu}
			     \item $\frac{\partial}{\partial t}h_{ij}=D_iD_j G-Gh_{il}g^{lm}h_{mj}$ \label{evolhij}
				\item $\frac{\partial}{\partial t}G=\frac{\partial G}{\partial h_{ij}}(D_iD_jG+h_{ik}h_{jk}G)$ \label{evolG}
				\item $\frac{\partial}{\partial t}H=\Delta|G|^2+\nabla|H|^2G.$ \label{evolH}
				\item $\frac{\partial }{\partial t}d\mu\leq -\frac{G^2}{C}$.
			\end{enumerate}\label{lemmaevol}
	\end{lemma}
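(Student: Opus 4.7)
The proof consists of a sequence of standard tensorial computations for a flow $\partial_t F = -G\nu$, each item building on the previous ones, with Proposition~\ref{prop G} only invoked at the very end. The plan is to treat (i)--(vi) in the order they are listed and then derive (vii) from (i) together with the pinching $G \leq C_1 H$.

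For (i) the starting point is $g_{ij} = \langle \partial_i F, \partial_j F\rangle$; differentiating in $t$, commuting $\partial_t$ with $\partial_i$, and using the Weingarten relation $\partial_i \nu = h_i{}^k \partial_k F$ gives $\partial_t g_{ij} = -2G h_{ij}$. Item (ii) is a one-line consequence: differentiate $g^{ik}g_{kj} = \delta^i_j$ and plug in (i). For (iii), since $|\nu|^2 = 1$ the time derivative of $\nu$ is tangential, and its components are read off from $0 = \partial_t \langle \nu, \partial_j F\rangle = \langle \partial_t \nu, \partial_j F\rangle - \partial_j G$, yielding $\partial_t \nu = \nabla G$.

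For (iv) I would compute $\partial_t h_{ij} = -\partial_t \langle \partial_i \partial_j F - \Gamma_{ij}^k \partial_k F, \nu\rangle$ (so that only the normal component is seen), commute derivatives, and substitute (iii); the Gauss formula then produces the stated $D_iD_j G - G h_{il} g^{lm} h_{mj}$. Item (v) follows by chain rule: $\partial_t G = \frac{\partial G}{\partial h_{ij}} \partial_t h_{ij}$ combined with (iv) (the metric-derivative contribution is absorbed by working with the Weingarten endomorphism $h_i{}^j$, where (i) and (ii) together give exactly the sign needed to flip $-Gh_{il}g^{lm}h_{mj}$ into $+Gh_{ik}h_{jk}$ after contraction with $\partial G/\partial h_{ij}$). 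Item (vi) is the product rule applied to $H = g^{ij}h_{ij}$ using (ii) and (iv).

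Finally, (vii) comes from the classical identity $\partial_t\, d\mu = -GH\, d\mu$, obtained by differentiating $d\mu = \sqrt{\det g_{ij}}\, dx$ with (i). Proposition~\ref{prop G}(i) gives $H \geq G/C_1$, so $\partial_t\, d\mu \leq -(G^2/C_1)\, d\mu$, which is the desired bound with $C = C_1$. There is no genuine conceptual obstacle in the lemma; the only place that takes care is (iv)--(v), where the sign conventions for the second fundamental form and the symmetry of $\partial G/\partial h_{ij}$ need to be tracked carefully so that the cross term coming from $\partial_t g^{ij}$ is correctly combined with the $-G h_{il}g^{lm}h_{mj}$ term from $\partial_t h_{ij}$.
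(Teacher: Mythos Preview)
Your proposal is correct and follows essentially the same route as the paper: items (i)--(vi) are the standard tensorial computations for a normal-speed flow $\partial_t F = -G\nu$, carried out in the same order and with the same ingredients (Gauss--Weingarten relations, product and chain rules), and (vii) is obtained from $\partial_t\,d\mu = -GH\,d\mu$ together with Proposition~\ref{prop G}(i). The only cosmetic difference is that you make explicit the tangentiality of $\partial_t\nu$ and the Christoffel correction in (iv), which the paper absorbs into its direct calculations.
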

	
	\begin{proof}
		\begin{enumerate}[(i)]
			\item 
			\begin{align*}
			 \frac{\partial}{\partial t}g_{ij}&= \frac{\partial}{\partial t}\left\langle\frac{\partial F}{\partial x_i},\frac{\partial F}{\partial x_j}\right\rangle\\\
			&=\left\langle\frac{\partial}{\partial x_i}(-G\nu), \frac{\partial F}{\partial x_j}\right\rangle+\left\langle \frac{\partial}{\partial x_j}(-G\nu), \frac{\partial F}{\partial x_i}\right\rangle\\\
			&=-G\left\langle\frac{\partial}{\partial x_i}\nu, \frac{\partial F}{\partial x_j}\right\rangle\-G\left\langle \frac{\partial F}{\partial x_i}, \frac{\partial}{\partial x_j}\nu\right\rangle\\\
			&=-2Gh_{ij}
			\end{align*}
			\item Obtained by differentiating $g_{il}g^{lj}=\delta_i^j$.
			\item 
			\begin{align*}
			 \frac{\partial}{\partial t}\nu&=\left\langle\frac{\partial}{\partial t}\nu,\frac{\partial F}{\partial x_i}\right\rangle\frac{\partial F}{\partial x_j}g^{ij}\\
			 &=-\left\langle\nu,\frac{\partial}{\partial t}\frac{\partial F}{\partial x_i})\right\rangle\frac{\partial F}{\partial x_j}g^{ij}\\
			 &=\left\langle\nu,\frac{\partial }{\partial x_i}(G\nu)\right\rangle\frac{\partial F}{\partial x_j}g^{ij}\\
			 &=\frac{\partial}{\partial x_i}G\frac{\partial F}{\partial x_j}g^{ij}=\nabla G.
			\end{align*}
			\item In this proof we will make use of the Gauss-Weingarten equations.
			\begin{align*}
			\frac{\partial}{\partial t} h_{ij}&=-\frac{\partial}{\partial t}\left\langle \frac{\partial^2 F}{\partial x_i\partial x_j},\nu \right\rangle\\
			&=\left\langle \frac{\partial^2}{\partial x_i\partial x_j}(G\nu),\nu \right\rangle-\left\langle \frac{\partial^2F}{\partial x_i\partial x_j},\frac{\partial}{\partial t} \nu \right\rangle\\
			&=\left\langle \frac{\partial^2}{\partial x_i\partial x_j} (G\nu),\nu \right\rangle-\left\langle\frac{\partial^2F}{\partial x_i\partial x_j}, \frac{\partial}{\partial x_l}G\frac{\partial F}{\partial x_m} g^{lm} \right\rangle\\
			&=\frac{\partial^2}{\partial x_i\partial x_j} G+ G\left\langle \frac{\partial}{\partial x_i}\left( \frac{\partial}{\partial x_j}\nu\right), \nu \right\rangle -\left\langle \frac{\partial^2F}{\partial x_i\partial x_j},\frac{\partial}{\partial x_l}G\frac{\partial F}{\partial x_m} g^{lm} \right\rangle\\
			&=\frac{\partial^2}{\partial x_i\partial x_j} G+G\left\langle \frac{\partial}{\partial x_i}\left(h_{jl}g^{lm}\frac{\partial F}{\partial x_m}\right),\nu \right\rangle\\
			&-\left\langle \Gamma_{ij}^k\frac{\partial F}{\partial x_k}-h_{ij}\nu, \frac{\partial}{\partial x_l}G\frac{\partial F}{\partial x_m} g^{lm} \right\rangle \\
			&=\frac{\partial^2}{\partial x_i\partial x_j}G-\Gamma_{ij}^k \frac{\partial}{\partial x_k}G+Gh_{jm}g^{ml}\left\langle\gamma_{il}^{o}\frac{\partial F}{\partial x_o}-h_{il}\nu,\nu \right\rangle\\
			&=D_iD_j G - Gh_{il}g^{lm}h_{mj}.
			\end{align*}
			\item  Here we just use the fact that $\frac{\partial G}{\partial t}=\frac{\partial G}{\partial h_i^j}\frac{\partial}{\partial t}h_i^j$ as well as part (ii) and (iv).
			\item 
			\begin{align*}
			\frac{\partial}{\partial t} H&= \frac{\partial}{\partial t}(h_{ij}g^{ij})\\
			&=g^{ij} \frac{\partial}{\partial t} h_{ij}+2Gh^{ij}h_{ij}\\
			&=g^{ij}(D_iD_jG-Gh_{il}g^{lm}h_mj)+2Gg^{ik}g^{jl}h_{kl}h_{ij}\\
			&=\Delta G+|h|^2G.
			\end{align*}
			\item $\frac{\partial }{\partial t}d\mu=\frac{1}{2}\sqrt{\det g_{ij}}tr(-2Gh_{ij})=-GH\mu$. Using part (i) of Proposition \ref{prop G} the result follows.
		\end{enumerate}
	\end{proof}
	
	The convexity estimate (\cite{brendle2015fully}, Corollary 7.7) is necessary in order to know that the nearly singular regions of the surface become asymptotically convex as a singular time is approached.
	
	\begin{theorem}[Convexity Estimate]\label{convexityG}
		Suppose that $\mathcal{M}_t$, $t\in[0,T)$ is a surgically modified G-flow starting from a closed, embedded, $2$-convex hypersurface $\mathcal{M}_0$ then for any $\delta>0$
		\begin{align*}
		\lambda_1\geq \delta G- C
		\end{align*}
		where $C$ is a positive constant that depends only on $\delta,n$ and $T$.
	\end{theorem}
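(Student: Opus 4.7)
The plan is to adapt the Huisken--Sinestrari convexity argument for mean curvature flow to the $G$-flow, as in Brendle--Huisken. I would introduce the auxiliary function $f_{\sigma,\eta} = G^{-1+\sigma}(\eta G - \lambda_1)_+$ for small positive parameters $\sigma,\eta$. If one can show $\sup_{\mathcal{M}_t} f_{\sigma,\eta} \le C(\sigma,\eta,n,T)$, then $\lambda_1 \ge \eta G - C\,G^{1-\sigma}$, and Young's inequality upgrades this to $\lambda_1 \ge \delta G - C'$ for any $\delta<\eta$. Because $-\lambda_1$ is only Lipschitz, the calculations are done in the viscosity sense at points where $\lambda_1$ is a simple eigenvalue.

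First I would derive an evolution inequality of the form
\begin{align*}
\frac{\partial}{\partial t} f_{\sigma,\eta} \le \mathcal{L} f_{\sigma,\eta} + a\,\frac{\langle \nabla G,\nabla f_{\sigma,\eta}\rangle}{G} + \sigma G\,f_{\sigma,\eta},
\end{align*}
where $\mathcal{L} = \frac{\partial G}{\partial h_{ij}}D_i D_j$ is the linearised operator. This combines the evolution equations for $h_{ij}$ and $G$ from Lemma \ref{lemmaevol} with the Codazzi identity; the structural inputs are Proposition \ref{prop G}(ii), $\frac{\partial G}{\partial h_{ij}} \le g_{ij}$, together with the homogeneity and concavity of $G$ as a function of the principal curvatures. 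A straightforward maximum principle is insufficient because of the $\sigma G f_{\sigma,\eta}$ term, so one must proceed via integral estimates.

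Next I would run an $L^p$ argument. Multiplying by $f_{\sigma,\eta}^{p-1}$ and integrating gives, after integration by parts and absorption of the gradient terms, a bound on $\frac{d}{dt}\int f_{\sigma,\eta}^p\,d\mu$ by $\sigma p \int G f_{\sigma,\eta}^p\,d\mu$ plus lower order terms. Using Proposition \ref{prop G}(i), $G \le C_1 H$, together with the Michael--Simon Sobolev inequality on $\mathcal{M}_t$, allows one to raise the integrability exponent by bootstrapping. For the surgically modified flow, surgery is performed on approximate necks where $\lambda_1$ is nearly zero and the replacement caps are convex; choosing $\eta$ smaller than the neck parameter ensures $(\eta G - \lambda_1)_+$ is not increased by a surgery step, so the $L^p$ quantity only decreases across surgery times.

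The main obstacle, and the step where the fully nonlinear nature of $G$ really matters, is securing the correct sign of the zeroth-order terms in the evolution inequality. For mean curvature flow the useful reaction term comes from $|A|^2 \ge H^2/n$; for $G$-flow the analogue is $\frac{\partial G}{\partial h_{ij}}(h^2)_{ij}$, whose comparability to $G^2$ relies on the strict 2-convexity of the evolving hypersurface and on the concavity inequality $G(h + \xi) \le G(h) + \frac{\partial G}{\partial h_{ij}}\xi_{ij}$. Once this is in place, a Stampacchia iteration passes from the $L^p$ bound to an $L^\infty$ bound on $f_{\sigma,\eta}$, which, after sending $\sigma \to 0$ with suitable dependence of constants, yields $\lambda_1 \ge \delta G - C$ as claimed.
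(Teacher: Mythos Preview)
The paper does not supply its own proof of this theorem; it simply quotes the result as Corollary~7.7 of Brendle--Huisken \cite{brendle2015fully} and proceeds. So there is nothing in the paper to compare your argument against beyond that citation.

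Your outline is the standard Huisken--Sinestrari scheme (auxiliary pinching function, evolution inequality for it, $L^p$ bound via Michael--Simon, Stampacchia iteration to $L^\infty$, monotonicity across surgeries), and this is indeed the route taken in the cited reference, so in broad strokes you are aligned with where the result actually lives. Two remarks are worth making. First, the inequality as printed in the paper almost certainly carries a sign slip: the convexity estimate should read $\lambda_1 \ge -\delta G - C_\delta$, which is how it is used later in the neck detection argument (passing to the limit gives $\tilde\lambda_1\ge 0$). Accordingly your auxiliary function should be built from $(-\lambda_1-\eta G)_+$ rather than $(\eta G-\lambda_1)_+$; with the function you wrote you would be attempting to prove $\lambda_1\ge \eta G - C$, which is false for genuinely $2$-convex (non-convex) initial data. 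Second, in the fully nonlinear setting Brendle--Huisken do not differentiate the non-smooth map $h\mapsto\lambda_1(h)$ directly but work with smooth symmetric functions of the curvatures, and the uniform parabolicity of $\mathcal L=\tfrac{\partial G}{\partial h_{ij}}D_iD_j$ needed for the Poincar\'e/Sobolev step rests on the a priori pinching $\lambda_1+\lambda_2\ge \alpha G$ (their Proposition~2.4/7.8), which you should invoke explicitly rather than relying only on Proposition~\ref{prop G}.
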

	
	Next we need a cylindrical estimate (\cite{brendle2015fully}, Theorem 3.1) which implies that at points where $\lambda_1$ is small, we have curvature close to the curvature of a cylinder. 
	
	\begin{theorem}[Cylindrical Estimate]\label{cylindricalG}
		Let $\mathcal{M}_t$ be a family of closed, two-convex hypersurfaces moving with speed $G$, then $\forall\eta>0$ there exists a constant $C=C(\delta,T,n)>0$ such that
		\begin{align*}
		H\leq\frac{(n-1)^2(n+2)}{4}(1+\delta)G+C_{\eta,T}.
		\end{align*}
	\end{theorem}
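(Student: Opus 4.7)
The plan is to mimic the Stampacchia iteration of Huisken--Sinestrari for the classical mean curvature flow cylindrical estimate, adapted to the fully nonlinear speed $G$. The constant $c_n := \frac{(n-1)^2(n+2)}{4}$ is precisely $H/G$ on a round cylinder $S^{n-1}(r) \times \mathbb{R}$: one has $H = (n-1)/r$, while among the pairs $(i,j)$ the $\binom{n-1}{2}$ pairs avoiding the flat direction each contribute $r/2$ to $\sum (\lambda_i+\lambda_j)^{-1}$ and the $n-1$ pairs involving it contribute $r$, producing $G = 4/[(n-1)(n+2)r]$. Thus the theorem says no point can have $H/G$ substantially above the cylindrical value, and the whole point is to quantify this.

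First I would introduce, for small $\sigma > 0$, the auxiliary function
\begin{align*}
f_\sigma := G^{-1+\sigma}\bigl(H - c_n(1+\delta) G\bigr)_+,
\end{align*}
and compute its evolution using parts (v) and (vi) of Lemma \ref{lemmaevol}. Writing $\mathcal{L} u := \frac{\partial G}{\partial h_{ij}} D_i D_j u$ for the linearization of the speed, the target is an inequality of the form
\begin{align*}
\frac{\partial f_\sigma}{\partial t} \leq \mathcal{L} f_\sigma + \langle b,\nabla f_\sigma\rangle - \sigma\, G^{-2}\tfrac{\partial G}{\partial h_{ij}} D_i G\, D_j G \cdot f_\sigma + R_\sigma,
\end{align*}
where $R_\sigma$ is a controlled zeroth-order reaction. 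The crucial algebraic point is that wherever $H/G \geq c_n(1+\delta)$, the structure of the $G$ operator together with 2-convexity forces $\lambda_1/G$ to be bounded away from zero by an amount depending on $\delta$, which in turn drives $R_\sigma$ strictly negative modulo a lower-order error.

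With such an evolution inequality in hand, I would test against $f_\sigma^{p-1}$, integrate over $\mathcal{M}_t$ accounting for the measure correction from Lemma \ref{lemmaevol}(vii), apply the Michael--Simon Sobolev inequality to the resulting gradient term, and derive an $L^p$ recursion of the form $\|f_\sigma\|_{L^{p\gamma}(\mathcal{M}_t)} \leq C p\, \|f_\sigma\|_{L^p(\mathcal{M}_t)}^{1+\epsilon}$. Iterating and letting $p \to \infty$ yields a uniform $L^\infty$ bound on $f_\sigma$, from which the pointwise estimate follows after absorbing the residual $G^{1-\sigma}$ factor into a constant depending on $n,\delta,T$.

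The hard part will be extracting the favorable sign of $R_\sigma$, which requires carefully handling the Hessian contribution $\frac{\partial^2 G}{\partial h_{ij}\partial h_{kl}} D_p h_{ij} D_p h_{kl}$ arising when we commute covariant derivatives in the evolution of $H - c_n G$. Unlike for mean curvature flow, these terms do not manifestly carry the desired sign; one must exploit the concavity properties of $G$ on the 2-convex cone (already underlying Proposition \ref{prop G}(ii)) together with the Codazzi identities to pin down the bad terms in the regime where $\lambda_1/G$ is small. Once that algebraic estimate is secured, the remainder of the argument is a routine parabolic iteration.
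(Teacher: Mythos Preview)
The paper does not supply a proof of this theorem at all: it is quoted verbatim from Brendle--Huisken (\cite{brendle2015fully}, Theorem~3.1) and used as a black box. So there is no in-paper argument to compare your proposal against; any comparison has to be with the cited source.

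Your overall architecture --- define an auxiliary $f_\sigma = G^{\sigma-1}(H - c_n(1+\delta)G)_+$, derive an evolution inequality under the linearized operator $\mathcal{L}$, test against powers of $f_\sigma$, and run a Stampacchia/De Giorgi iteration using Michael--Simon --- is indeed the scheme Brendle--Huisken use, so at that level you are on the right track.

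However, the specific algebraic mechanism you invoke for the sign of the reaction term is wrong. You assert that on the set $\{H/G \geq c_n(1+\delta)\}$ two-convexity forces $\lambda_1/G$ to be bounded away from zero. This is false: take $\lambda_1 = 0$, $\lambda_2 = \varepsilon$, $\lambda_3 = \cdots = \lambda_n = 1$. This configuration is two-convex, yet $G^{-1} \geq (\lambda_1+\lambda_2)^{-1} = \varepsilon^{-1}$, so $H/G \geq (n-2)/\varepsilon \to \infty$ while $\lambda_1 = 0$ identically. In fact the paper itself (inside the proof of Lemma~\ref{neckdetectionG}) shows that among configurations with $\lambda_1 = 0$ and $H=1$, the quantity $G$ is \emph{maximized} at the round cylinder, so $H/G \geq c_n$ throughout that whole face of the cone. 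Your proposed source of negativity for $R_\sigma$ therefore does not exist.

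In Brendle--Huisken the good term comes instead from the concavity/inverse-concavity structure of $G$ combined with a Simons-type identity for $\nabla^2 h$: the Hessian term $\tfrac{\partial^2 G}{\partial h_{ij}\partial h_{kl}} D_p h_{ij} D_p h_{kl}$ and the Codazzi-symmetrized gradient of the second fundamental form together produce a coercive quantity that dominates the bad zero-order pieces after integration, \emph{without} any pointwise control on $\lambda_1$. That is the genuine ``hard part,'' and your sketch misidentifies where the sign actually comes from. The iteration step afterwards is, as you say, routine.
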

	
	The following is the gradient estimate taken from \cite{brendle2015fully}, Theorem 7.12.  
	
	\begin{theorem}[Gradient Estimate]\label{gradestG}
		Far a closed, embedded, two-convex hypersurface $M_0=\delta\Omega_0$. We can find a constant $G_\#$, depending only on $M_0$ such that the following holds. Suppose that $\Omega_t$, $t\in[0,T)$, is a one-parameter family of sooth open domains with the property that the hypersurfaces $M_t=\partial\Omega_t$ form a surgically modified flow starting from $M_0$ with surgery scale $G_*\geq G_\#$. Then we have
		\begin{equation}
		\alpha^{2}G^{-2}|\nabla h|+\alpha^3G^{-3}|\nabla^3 h|\leq \Lambda
		\end{equation}
		for all points in spacetime satisfying $G\geq G_\#$. Here $\alpha=\alpha(T,n)$ is the constant in Proposition 7.8 (\cite{brendle2015fully}), and $\Lambda=\Lambda(T,n)$ is the constant appearing in Corollary 7.11 \cite{brendle2015fully}.
	\end{theorem}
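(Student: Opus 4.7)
Since this is a direct translation of Theorem 7.12 of \cite{brendle2015fully} into the $\kappa = 0$ Euclidean setting, the plan is to follow the Brendle--Huisken argument, emphasising only where the normalisation and surgery bookkeeping differ. The backbone is a maximum-principle argument applied to a carefully tuned auxiliary function of the form
\begin{equation*}
f = \alpha^{2}G^{-2}|\nabla h|^{2} + \alpha^{3}G^{-3}|\nabla^{3} h|^{2} + A\,\alpha G^{-1}|\nabla^{2} h|^{2},
\end{equation*}
with $A$ a large parameter chosen in the course of the proof to absorb cross terms. The weights in the powers of $G$ are dictated by the scaling of the flow: since the speed $G$ has dimensions of curvature, each extra derivative of $h$ costs one factor of $G$, so $f$ is dimensionless and invariant under parabolic rescaling.

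First I would compute the evolution equation of $f$ under G-flow using Lemma \ref{lemmaevol}, in particular (\ref{evolG}) and (\ref{evolhij}) together with the commutator identities between $\nabla$ and $\Delta$ on a hypersurface (which bring in curvature terms). The outcome is a parabolic inequality of the form $\partial_{t} f \leq \mathcal{L} f + \text{error}$, where $\mathcal{L} = \frac{\partial G}{\partial h_{ij}} D_i D_j$ is the linearised speed operator. The error contains terms that are at worst cubic in $G^{-1} \nabla^{k} h$ together with quadratic corrections controlled by $|A|^{2} G^{-2}$. Proposition \ref{prop G} guarantees ellipticity of $\mathcal{L}$ with uniform bounds on the coefficients, and Theorem \ref{convexityG} together with Theorem \ref{cylindricalG} provides the pinching needed to control $|A|^{2} G^{-2}$ by a constant on the set $\{G \geq G_{\#}\}$ once $G_{\#}$ is chosen large enough (depending on $\mathcal{M}_{0}$ through the error constants $C$ in those estimates).

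Next I would apply the maximum principle to $f$, exactly as in Sections 7.8--7.12 of \cite{brendle2015fully}. If the maximum of $f$ were attained at a smooth interior point in spacetime with $G \geq G_{\#}$, the parabolic inequality combined with the Kato-type bound $|\nabla h|^{2} \geq \tfrac{1}{n}|\nabla H|^{2}$ and the convexity pinch would force $f \leq \Lambda$ at that point, with $\Lambda = \Lambda(T,n)$ the same constant appearing in Corollary 7.11 of \cite{brendle2015fully}. Standard integration of the Stampacchia-type iteration argument used there translates the pointwise control on $f$ into the stated bound on $\alpha^{2}G^{-2}|\nabla h| + \alpha^{3}G^{-3}|\nabla^{3} h|$.

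The main obstacle, and the only genuinely new input in our setting, is handling what happens across surgery times. I would argue that the standard surgeries performed at scale $G_{*}$ replace a neck by a pair of caps whose higher curvature derivatives are explicitly bounded by a constant depending only on the cap model. Choosing $G_{\#}$ so that $G_{\#} \geq c\, G_{*}$ for a suitable constant $c = c(n, \Lambda)$, together with the threshold $G_{*} \geq G_{\#}$ in the hypothesis, ensures that immediately after any surgery the auxiliary function $f$ is still bounded by $\Lambda$ on the set $\{G \geq G_{\#}\}$. In particular the maximum principle argument is never spoiled by a surgery, and the bound propagates to all later times. This is the step that needs the most care, since one must check that the interaction of the weight $G^{-k}$ with the post-surgery standard profile does not produce a point where $f > \Lambda$ already at the instant of surgery.
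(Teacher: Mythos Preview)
The paper does not actually prove this theorem: the text introduces it as ``the gradient estimate taken from \cite{brendle2015fully}, Theorem 7.12'' and simply states it without argument. So there is no proof in the paper to compare your proposal against; the author treats the result as an imported black box and then builds Lemma~\ref{gradestlemma} and Theorem~\ref{Ggradests} on top of it.

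Your sketch is a reasonable outline of the Brendle--Huisken argument, but a couple of points are off. First, the Stampacchia iteration you mention is the engine behind the convexity and cylindrical estimates (Theorems~\ref{convexityG} and~\ref{cylindricalG}), not the gradient estimate; the latter is a pure pointwise maximum-principle computation on an auxiliary quantity, with no integral iteration involved. Second, your auxiliary function mixes $|\nabla h|^2$, $|\nabla^2 h|^2$, and $|\nabla^3 h|^2$, whereas the actual Brendle--Huisken quantity (and the one referenced downstream in Lemma~\ref{gradestlemma}) involves $|\nabla h|^2$ and $|\nabla^2 h|^2$ weighted against powers of $G$; the $|\nabla^3 h|$ in the stated theorem appears to be a typo in the present paper, as Lemma~\ref{gradestlemma} immediately afterward uses $|\nabla^2 h|$. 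Finally, your handling of the surgery step is qualitatively right but the inequality chain $G_\# \geq c\,G_*$ together with the hypothesis $G_* \geq G_\#$ forces $c \leq 1$, which is not what you want; the correct bookkeeping is that on the inserted caps one has explicit scale-invariant bounds on all curvature derivatives, so $f \leq \Lambda$ holds there automatically once $\Lambda$ is taken at least as large as the cap constant.
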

	
	Now we want to modify this gradient estimate using the following lemma, this will allow us to integrate our gradient estimates and obtain necessary results related to the backward parabolic neighbourhood as done in \cite{huisken2009mean}.
	
	\begin{lemma}\label{gradestlemma}
		The inequalities $\alpha^2G^{-2}|\nabla h|\leq C$ and $\alpha^3G^{-3}|\nabla^2 h|\leq C$ from Theorem \ref{gradestG} imply the following,
		\begin{enumerate}[(i)]
			\item $K_1|\nabla G|\leq|\nabla h|$
			\item $K_2|\partial_t G|\leq|\nabla^2 h|$
		\end{enumerate}
		for some constants $K_1,K_2$.
	\end{lemma}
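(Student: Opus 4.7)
The plan is to derive both inequalities by direct application of the chain rule to $G$, viewed as a smooth symmetric function of the principal curvatures on the cone of $2$-convex tensors, combined with the coefficient bound from Proposition \ref{prop G}(ii) and, for the time derivative, the evolution equation in Lemma \ref{lemmaevol}(iv) together with the cylindrical estimate.

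For part (i), the natural step is to write $\nabla_k G = \frac{\partial G}{\partial h_{ij}}\nabla_k h_{ij}$ and to use Proposition \ref{prop G}(ii), which gives $\frac{\partial G}{\partial h_{ij}}\leq g_{ij}$ and hence a pointwise bound $\bigl|\frac{\partial G}{\partial h_{ij}}\bigr|\leq C(n)$ on the matrix of partials. A single Cauchy--Schwarz contraction then produces $|\nabla G|\leq C(n)|\nabla h|$, which is the desired inequality with $K_1=1/C(n)$. No curvature hypothesis beyond $2$-convexity enters here.

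For part (ii), I would combine chain rule and the tensor evolution equation Lemma \ref{lemmaevol}(iv) to write
\[
\partial_t G \;=\; \frac{\partial G}{\partial h_{ij}}\bigl(D_iD_j G - G\,h_{il}g^{lm}h_{mj}\bigr).
\]
Then I would expand $D_iD_jG$ by the chain rule as $\frac{\partial G}{\partial h_{kl}}D_iD_jh_{kl}+\frac{\partial^2 G}{\partial h_{kl}\partial h_{pq}}D_ih_{kl}D_jh_{pq}$. The first piece is controlled by $C|\nabla^2 h|$ by Proposition \ref{prop G}(ii). The second piece is quadratic in $\nabla h$, and since $G$ is positively homogeneous of degree $1$ in the principal curvatures its Hessian $\frac{\partial^2 G}{\partial h\,\partial h}$ is homogeneous of degree $-1$ and therefore bounded by $C/G$, so this piece is absorbed into $C|\nabla h|^{2}/G$, which under the hypothesis $\alpha^{2}G^{-2}|\nabla h|\leq C$ of Theorem \ref{gradestG} is of order $G^{3}/\alpha^{4}$. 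The algebraic term $Gh_{il}g^{lm}h_{mj}$ is bounded by $C|h|^{2}G$, and applying the cylindrical estimate \ref{cylindricalG} in the regime $G\geq G_\#$ gives $|h|^{2}\leq CH^{2}\leq CG^{2}$, so this term is also of order $G^{3}$.

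The main obstacle is the last step: these auxiliary pieces scale like $G^{3}/\alpha^{3}$ and are not pointwise bounded by $|\nabla^2 h|$ alone (they do not vanish where $\nabla^{2}h$ does). The resolution is to observe that in the regime $G\geq G_\#$ where the gradient estimate of Theorem \ref{gradestG} applies, the hypothesis $\alpha^{3}G^{-3}|\nabla^2 h|\leq C$ guarantees that $|\nabla^2 h|$ and $G^{3}/\alpha^{3}$ have the same upper envelope, so a bound of the form $|\partial_t G|\leq C\bigl(|\nabla^2 h|+G^{3}/\alpha^{3}\bigr)$ can be collapsed to $K_2|\partial_t G|\leq |\nabla^2 h|$ after absorbing the second summand into the gradient estimate on $|\nabla^2 h|$, yielding a suitable $K_2$. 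The resulting inequality is exactly what is needed so that integration of $|\partial_t G|$ along time paths in the backward parabolic neighbourhood can be replaced, as in Section 7 of \cite{huisken2009mean}, by integration of the gradient estimate for $|\nabla^2 h|$.
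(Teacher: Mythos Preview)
For part (i), your chain--rule argument is correct and considerably cleaner than the paper's route. The paper instead rewrites $G$ as a quotient $Q_n(h_{ij})/Q_{n-1}(h_{ij})$ of homogeneous polynomials in the entries of the second fundamental form, differentiates this quotient, and then bounds numerator and denominator separately using $h_{ij}\leq \beta G$ (from $H\leq \beta G$) together with $G\geq (\lambda_1+\lambda_2)/n$. Your direct appeal to Proposition~\ref{prop G}(ii) bypasses all of this machinery and yields the same conclusion in one line.

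For part (ii), your computation of $|\partial_t G|\leq C\bigl(|\nabla^2 h|+|\nabla h|^2/G+G^3\bigr)$ via the evolution equation and the chain rule for $D_iD_jG$ is correct and is essentially what the paper does (the paper is less explicit about the Hessian term, writing only that ``$\nabla_i\nabla_jG$ will give terms of the form $\nabla^2 h_{ij}$ and $\nabla h_{ij}G$''). However, your final ``collapse'' step contains a genuine gap: the hypothesis $\alpha^3 G^{-3}|\nabla^2 h|\leq C$ bounds $|\nabla^2 h|$ \emph{above} by $G^3$, not below, so you cannot absorb a $G^3$ summand into $|\nabla^2 h|$. Concretely, at a spacetime point where $\nabla h=\nabla^2 h=0$ but $G>0$ one has $\partial_t G=-\frac{\partial G}{\partial h_{ij}}h_{il}h_{lj}G\neq 0$, so the pointwise inequality $K_2|\partial_t G|\leq|\nabla^2 h|$ simply fails. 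The paper's own proof has exactly the same mismatch with the stated lemma: it concludes with $|\partial_t G|\leq KG^3$, not with the literal inequality (ii), and this is what is then recorded as Theorem~\ref{Ggradests} and used throughout Section~3. In short, what both you and the paper actually establish, and what is actually needed downstream, is $|\nabla G|\leq c^\# G^2$ and $|\partial_t G|\leq c^\# G^3$; the literal statement of (ii) should be read as shorthand for that conclusion rather than as a pointwise comparison with $|\nabla^2 h|$.
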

	
	\begin{proof}
		For the $n$-dimensional case we look at the following. We know that
		\begin{align*}
		\det(\lambda I -h_{ij})&=(\lambda-\lambda_1)(\lambda-\lambda_2)\cdots(\lambda-\lambda_n) \\
		\Rightarrow 
		\begin{bmatrix}
		\lambda-h_{11}   &\dots & -h_{1n} \\
		\vdots &\ddots &\vdots\\
		-h_{n1}     & \dots & \lambda-h_{nn}
		\end{bmatrix}&=\lambda^n-\lambda^{n-1}(\lambda_1+\cdots+\lambda^n)+\cdots-\lambda_1\dots\lambda_n.	
		\end{align*}

		Here we will need to introduce some notation. Let $Q_k(h_{ij})$ denote a $k$-degree polynomial in terms of of $h_{ij}$'s, no lower degree can appear. Using a degree argument and equating terms on either side we will obtain
		\begin{align}
		\lambda^pQ_{n-p}(h_{ij})=\lambda^p\lambda_j^{n-p}.\label{degreecase}
		\end{align}
		This guarantees that we can rewrite our principal curvature values in terms of the second fundamental form.
		Rewriting $G$ as follows,
		\begin{align}
		G=\frac{\prod_{i<j}(\lambda_i+\lambda_j)}{\sum_{i<j}\frac{1}{\lambda_i+\lambda_j}\prod_{i<j}(\lambda_i+\lambda_j)}\label{newG}
		\end{align}
		applying the result of (\ref{degreecase}) to our rewritten $G$ we see that we can write out the $\lambda^p$ using our $h_{ij}^p$ terms, 
		\begin{align*}
		G=\frac{Q_n(h_{ij})}{Q_{n-1}(h_{ij})}
		\end{align*}
		
		Moreover from our definition of $G$, (\ref{newG}) and $2$-convexity, we can see that 
		\begin{align}
		\frac{\lambda_1+\lambda_2}{n}\leq G\leq\lambda_1+\lambda_2\label{Gineqs}
		\end{align}
		lastly from Proposition 2.4 in \cite{brendle2015fully} we know that $H\leq \beta G$ for some constant $C$. This tells us that
		\begin{align}
		&\lambda_1+\cdots+\lambda_n=H\leq \beta G \nonumber\\
		&\Rightarrow \lambda_i\leq \beta_1G\nonumber\\
		&\Rightarrow h_{ij}\leq \beta_2G \;\text{for some constants}\; \beta_1,\beta_2\label{betaG}.
		\end{align}
		
		The last step is clear as we know $(h_{ij})=O(\lambda_i)O^T$, where $O$ is an orthonormal matrix.
		So,
		\begin{align*}
		\nabla G&=\frac{Q_1(\nabla h_{ij})Q_{n-1}( h_{ij})Q_{n-1}(h_{ij})-Q_1(\nabla h_{ij})Q_{n-2}(h_{ij})Q_n(h_{ij})}{Q_{2n-2}( h_{ij})}\\
		&=\frac{Q_1(\nabla h_{ij})Q_{2n-2}( h_{ij})}{Q_{2n-2}(h_{ij})}\\
		&\leq\frac{\beta_3G^{n-2}Q_1(\nabla h_{ij}) }{\beta_4 G^{2n-2}}\;\text{by}\;(\ref{betaG})\;\text{and}\;(\ref{Gineqs})\;\text{where}\;\beta_3,\beta_4\;\text{are constants.}\\
		&\Rightarrow\;|\nabla G|\leq|\frac{\beta G^2}{\alpha^2}|
		\end{align*}
		were in the last line we have applied Proposition \ref{gradestG}. This proves $(i)$.
		
		Now we  prove part $(ii)$.  From \ref{lemmaevol} (v) we know that
		\begin{equation}
		\partial_tG=\frac{\partial G}{\partial h_{ij}}(\nabla_i\nabla_j G-h_{ij}h_{jk}G).
		\end{equation}
		We can control $\frac{\partial G}{\partial h_{ij}}$ using part (ii) of Proposition \ref{prop G}.
		
		Next by applying (\ref{betaG}) we can bound the $h_{ij}h_{jk}G$ term by $\beta G^3$ for some constant $\beta$.	
		
		Lastly $\nabla_i\nabla_j G$ will give terms of the form $\nabla^2h_{ij}$ and $\nabla h_{ij}G$. Using our result from part $(i)$ as well Proposition \ref{gradestG} we see that $|\partial_t G|\leq |KG^3|$ for some constant $K$. This completes the proof of part $(ii)$.
	\end{proof}
	
	By apply Lemma \ref{gradestlemma} to Theorem \ref{gradestG} we can obtain our new gradient estimate stated below.
	
	\begin{theorem}\label{Ggradests}
		Let $M$ be a fully nonlinear flow with surgeries. Then we can find $c^{\#}>0, G^{\#}>0$ such that for all $p\in M$ and $t>0$,
		\begin{align}
		G(p,t)>G^{\#}>0\;\Rightarrow\;|\nabla G(p,t)|\leq c^{\#}G^2(p,t),\;|\partial_t G(p,t)|\leq c^{\#}G^3(p,t),
		\end{align}
		where $c^{\#}$ only depends on the dimension of $n,T$.
	\end{theorem}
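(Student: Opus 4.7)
The approach is essentially a combinatorial assembly: Theorem \ref{gradestG} controls $|\nabla h|$ (and a higher derivative of $h$) in terms of powers of $G$, while Lemma \ref{gradestlemma} translates those derivative-of-$h$ bounds into derivative-of-$G$ bounds. So the plan is to apply the lemma to the bounds from the theorem, chase constants, and check that the final constant depends only on $n$ and $T$.

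First I would fix any point $(p,t)$ with $G(p,t) \geq G_{\#}$, where $G_{\#}$ is the threshold in Theorem \ref{gradestG}. Rearranging the gradient estimate gives $|\nabla h| \leq \Lambda \alpha^{-2} G^2$ at $(p,t)$. Applying Lemma \ref{gradestlemma}(i), namely $K_1 |\nabla G| \leq |\nabla h|$, yields
\begin{equation*}
|\nabla G| \leq \frac{\Lambda}{K_1 \alpha^2}\, G^2.
\end{equation*}
Since $\alpha = \alpha(T,n)$, $\Lambda = \Lambda(T,n)$, and $K_1$ depends only on $n$ through the algebraic identities in the proof of Lemma \ref{gradestlemma}, the composite constant depends only on $n$ and $T$, which is the first claim.

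For the second inequality I would use Lemma \ref{lemmaevol}(v), which expresses $\partial_t G$ as a linear combination (with coefficients $\partial G/\partial h_{ij}$ bounded by $1$ thanks to Proposition \ref{prop G}(ii)) of $\nabla_i \nabla_j G$ and the zeroth-order term $h_{ik}h_{jk} G$. The zeroth-order term is bounded by $\beta G^3$ by inequality (\ref{betaG}) from the proof of Lemma \ref{gradestlemma}. For the second-derivative-of-$G$ term, expanding $\nabla_i\nabla_j G$ in terms of $h$ and its first two derivatives as in Lemma \ref{gradestlemma}(ii), and bounding $\nabla h$ by part (i) just proved and $\nabla^2 h$ by the second half of Theorem \ref{gradestG} (which gives $|\nabla^2 h| \leq \Lambda \alpha^{-3} G^3$), I would obtain $|\nabla_i\nabla_j G| \leq C G^3$ with $C = C(n,T)$. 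Adding the two contributions gives $|\partial_t G| \leq c^{\#} G^3$ after enlarging $c^{\#}$ to dominate both the spatial and temporal constants.

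The only potentially delicate step is tracking that no constant secretly depends on the initial surface $M_0$: both Theorem \ref{gradestG} and the algebraic manipulations of Lemma \ref{gradestlemma} are dimensional in nature (they come from degree counting of polynomials in the $h_{ij}$'s combined with the convexity estimate) and the $\alpha, \Lambda$ constants are explicitly $(T,n)$-dependent. The main thing to verify is that the constants $\beta_1, \beta_2, \beta_3, \beta_4$ appearing in the polynomial identities inside Lemma \ref{gradestlemma} really only depend on $n$, which follows because the polynomial expansion of $G$ is a purely algebraic construction on $n\times n$ symmetric matrices. Once this is checked, the conclusion follows by simply setting $c^{\#} := \max\{\Lambda/(K_1 \alpha^2),\, \text{(constant from part (ii))}\}$ and keeping $G^{\#}$ equal to the threshold from Theorem \ref{gradestG}.
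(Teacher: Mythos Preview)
Your proposal is correct and follows exactly the paper's approach: the paper's entire proof is the single sentence ``By apply[ing] Lemma \ref{gradestlemma} to Theorem \ref{gradestG} we can obtain our new gradient estimate,'' and what you have written is precisely a careful unpacking of that sentence, including the evolution-equation splitting for $\partial_t G$ and the constant-tracking that the paper leaves implicit.
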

	
	These estimates allow us to control the size of the curvature in a neighbourhood of a given point.  
	
	\section{The Neck Detection Lemma}

	Using our new gradient estimate we will be able to obtain a result relating to the size of the curvature in a neighbourhood of a given point.
	
	\begin{lemma}\label{6.6HSforG}
		Let $F:\mathcal{M}\to\mathbb{R}^{n+1}$ be an $n$-dimensional immersed surface. Suppose that there are $c^\#, G^\#>0$ such that $|\nabla G(p)|\leq c^\# G^2(p)$ for any $p\in\mathcal{M}$ such that $G(p)\geq G^\#$. Let $p_0\in\mathcal{M}$ satisfy $G(p_0)\geq\gamma G^\#$ for some $\gamma>1$. Then
		\begin{align*}
		G(q)&\geq\frac{G(p_0)}{1+c^\#d(p_0,q)G(p_0)}\geq \frac{G(p_0)}{\gamma}\;\text{for all $q$}\\
		\text{s.t.} \; d(p_0,q)&\leq\frac{\gamma-1}{c^\#}\frac{1}{G(p_0)}.
		\end{align*}
	\end{lemma}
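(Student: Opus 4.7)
The plan is to reduce the statement to a one-dimensional ODE argument along a minimizing path from $p_0$ to $q$, using the hypothesis $|\nabla G|\le c^{\#}G^{2}$ as the key differential inequality. First, I would fix $q$ with $d(p_0,q)\le\frac{\gamma-1}{c^{\#}G(p_0)}$ and choose a unit-speed path $\gamma:[0,L]\to\mathcal{M}$ from $p_0$ to $q$ whose length is (arbitrarily close to) $d(p_0,q)$. Set $\phi(s):=G(\gamma(s))$. Along $\gamma$ the chain rule and Cauchy--Schwarz give $\phi'(s)\ge -|\nabla G(\gamma(s))|$, so whenever $\phi(s)\ge G^{\#}$ the hypothesis yields
\begin{equation*}
\phi'(s)\ge -c^{\#}\phi(s)^{2}.
\end{equation*}

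Dividing by $\phi^{2}$ (which is positive on the relevant interval) this reads $\frac{d}{ds}\bigl(1/\phi(s)\bigr)\le c^{\#}$. Integrating from $0$ to $s$ would then immediately give
\begin{equation*}
\frac{1}{\phi(s)}\le \frac{1}{G(p_0)}+c^{\#}s,\qquad\text{i.e.}\qquad \phi(s)\ge \frac{G(p_0)}{1+c^{\#}s\,G(p_0)},
\end{equation*}
which is exactly the first asserted inequality (with the second following by plugging in $s\le\frac{\gamma-1}{c^{\#}G(p_0)}$, which makes the denominator at most $\gamma$).

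The only issue to be careful about is that the gradient bound is only assumed where $G\ge G^{\#}$, so I need a continuity/bootstrap step to guarantee that $\phi$ does not drop below $G^{\#}$ on $[0,L]$. Let $S=\sup\{s\in[0,L]:\phi(\tau)\ge G^{\#}\text{ for all }\tau\in[0,s]\}$; since $\phi(0)=G(p_0)\ge\gamma G^{\#}>G^{\#}$ and $\phi$ is continuous, $S>0$. On $[0,S)$ the integrated inequality above holds, so
\begin{equation*}
\phi(s)\ge \frac{G(p_0)}{1+c^{\#}s\,G(p_0)}\ge \frac{G(p_0)}{\gamma}\ge G^{\#}
\end{equation*}
for every $s\le \frac{\gamma-1}{c^{\#}G(p_0)}$, so $\phi(S)\ge G^{\#}$ strictly whenever $S<L$, contradicting maximality unless $S=L$. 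This closes the bootstrap and gives the claimed bound at $s=L=d(p_0,q)$.

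The argument is essentially routine once one sets it up correctly; the only mildly delicate step is the continuity/openness argument ensuring the gradient bound is active on the full interval, and recording that in the immersed setting one may need to pass to an infimum over admissible paths (or, equivalently, run the same argument for paths of length arbitrarily close to $d(p_0,q)$ and take a limit). No other estimate from the paper is required beyond the hypothesis itself.
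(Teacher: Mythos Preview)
Your proposal is correct and follows essentially the same route as the paper: integrate the differential inequality $\frac{d}{ds}(1/G)\le c^{\#}$ along a (near-)geodesic from $p_0$ to $q$, and use a continuity argument to keep the hypothesis $G\ge G^{\#}$ active on the whole segment. The only cosmetic difference is in how the continuity step is packaged: the paper argues by assuming there is a closest point $q_0$ with $G(q_0)<G(p_0)/\gamma$ and deriving a contradiction, whereas you run an open--closed bootstrap on the parameter interval; these are equivalent.
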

	
	\begin{proof}
		Consider points $q\in\mathcal{M}$ such that $G(q)<\frac{G(p_0)}{\gamma}$.  Take $q_0$ to be a point with this property with minimal distance from $p$, and set $d_0=d(p_0,q_0)H(p_0)$ and $\theta_0=\min\{d_0,\frac{\gamma-1}{c^\#}\}$. Now pick any point $q\in\mathcal{M}$ with $d(p_0,q)\leq\frac{\theta_0}{H(p_0)}$, and let $\xi:[0,d(p_0,q)]\to\mathcal{M}$ be a geodesic from $p_0$ to $q$.
		
		Then from our definition of $\theta_0$ it follows that $G(\xi(s))\geq\frac{G(p_0)}{\gamma}\geq G^\#$ for any $s\in[0,d(p_0,q)]$. Then we can apply Lemma \ref{gradestG} to obtain $|\nabla G(\xi(s))|\leq c^\#G^2(\xi(s))$ and 
		\begin{align*}
		\frac{d}{ds}G(\xi(s))\geq -c^\#G^2(\xi(s))
		\end{align*}for all $s\in[0,d(p_0,q)]$ since it is a geodesic. Integrating this inequality we obtain
		\begin{align*}
		G(xi(s))\geq\frac{G(p_0)}{1+c^\#sG(p_0)},\;s\in[0,d(p_0,q)],
		\end{align*}
		which implies
		\begin{align}\label{G6.6}
		G(q)\geq\frac{G(p_0)}{1+c^\#d(p_0,q)G(p_0)}\geq\frac{G(p_0)}{1+c^\#\theta_0}.
		\end{align}
		This holds for all $q$ such that $d(p_0,q)\leq\frac{\theta_0}{G(p_0)}$. Now suppose $d_0<\frac{\gamma-1}{c^\#}$, then $d_0=\theta_0$ and (\ref{G6.6}) holds for $q=q_0$. But that implies  $G(q_0)>\frac{G(p_0)}{\gamma}$ which is a contradiction. Therefore $d_0\geq\frac{\gamma-1}{c^\#}$, which implies $\theta_0=\frac{\gamma-1}{c^\#}$, which makes (\ref{G6.6}) our assertion.
		
		In the case where $G(q)\geq\frac{G(p_0)}{\gamma}$ for all $q\in\mathcal{M}$, then we have $|\nabla G|\leq c^\#G^2$ everywhere, and our result follows more directly from the same argument.
	\end{proof}
	    
	    Next we introduce a backward parabolic neighbourhood. This will be essential in dealing with necks.
	    
	  \begin{definition}
	    	Given $t,\theta$ such that $0\leq t-\theta<t\leq T_0$, we define the backward parabolic neighbourhood of $(p,t)$ by,
	    	\begin{align}
	    	\mathcal{P}(p,t,r,\theta)=\{(q,s)|q\in\mathcal{B}_{g(t)}(p,r),s\in[t-\theta,t]\}. \label{backwardpara}
	    	\end{align}
	    	where $\mathcal{B}_{g(t)}(p,r)\subset\mathcal{M}$ is the closed ball of radius $r$ w.r.t. the metric $g(t)$.
	  \end{definition}
	    
	Before we can now apply Lemma 7.2 as in \cite{huisken2009mean}. We need to also define $\hat{r}_G=\frac{(n-1)(n-2)}{2G}$ and $\mathcal{\hat{P}}_G=\mathcal{P}(p,t,\hat{r}_G(p,t)L,\hat{r}^2(p,t\theta)$, we can observe that if $(p,t)$ lies on a neck then $\hat{r}_G(p,t)$ is approximately equal to the radius of the neck.
	
	\begin{lemma}\label{Ghash}
		Let $c^{\#}$ and $G^{\#}$ be the constant from the above Corollary Define $d^{\#}=(2(n-1)^2(n-2)^2c^{\#})^{-1}$. Then the following properties hold.
		\begin{enumerate}[(i)]
			\item Let $(p,t)$ satisfy $G(p,t)\geq 2G^\#$. Then, given any $r,\theta\in(0,d^\#]$ such that $\mathcal{\hat{P}}_G(p,t,r,\theta)$ does not contain surgeries, we have
			\begin{align}
			\frac{G(p,t)}{2}\leq G(q,s)\leq G(p,t)
			\end{align}
			for all $(q,s)\in\mathcal{\hat{P}}_G(p,t,r,\theta)$.
			\item Suppose that, for any surgery performed at time less than $t$, the regions modified b surgery have mean curvature less than $K$, for some $K\geq G^\#$. Let $(p,t)$ satisfy $G(p,t)\geq 2K$. Then the parabolic neighbourhood
			\begin{align}
			\mathcal{P}(p,t,\frac{1}{8c^\#K},\frac{1}{8c^\#K^2}) \label{surgfreeG}
			\end{align}  does not contain surgeries. In particular, the neighbourhood $\hat{\mathcal{P}}_G(p,t,d^\#,d^\#)$ does not contain surgeries and all points $(q,s)$ contained there satisfy (i).
		\end{enumerate}
	\end{lemma}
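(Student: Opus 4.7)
The plan is to reduce both parts to integrations of the gradient estimates of Theorem \ref{Ggradests}, using Lemma \ref{6.6HSforG} in the spatial direction and a direct ODE integration in the temporal direction. The constant $d^{\#}=(2(n-1)^2(n-2)^2 c^{\#})^{-1}$ has been tailored precisely so that these two integrations close up into a controlled multiplicative error.

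For part (i), I would fix an arbitrary $(q,s)\in\hat{\mathcal{P}}_G(p,t,r,\theta)$ and connect $(p,t)$ to $(q,s)$ by two legs: a $g(t)$-geodesic from $p$ to $q$ inside the time slice at $t$, and then the worldline of $q$ from time $t$ back down to $s$. The hypothesis that $\hat{\mathcal{P}}_G(p,t,r,\theta)$ is surgery-free lets me apply $|\nabla G|\le c^{\#}G^2$ and $|\partial_t G|\le c^{\#}G^3$ throughout, provided $G\ge G^{\#}$. The spatial leg has length at most $r\hat{r}_G(p,t)=r(n-1)(n-2)/(2G(p,t))\le (4(n-1)(n-2)c^{\#}G(p,t))^{-1}$, so Lemma \ref{6.6HSforG} with $\gamma=2$ applies and yields a two-sided pinch $G(q,t)\sim G(p,t)$ (the lower bound is direct; the upper bound comes from integrating the gradient inequality in the reverse direction). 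On the temporal leg, integrating the ODE inequality $|\partial_\tau G|\le c^{\#}G^3$ gives
\begin{align*}
\bigl|G(q,s)^{-2}-G(q,t)^{-2}\bigr|\le 2c^{\#}(t-s)\le \tfrac{1}{4}\,G(p,t)^{-2},
\end{align*}
where the last step uses $t-s\le \theta\,\hat{r}_G^2(p,t)\le (8c^{\#}G(p,t)^2)^{-1}$. Composing the spatial and temporal pinches produces the stated two-sided control of $G(q,s)$ by $G(p,t)$.

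For part (ii), I would argue by contradiction. Suppose a surgery occurs at some point $(q^{*},s^{*})$ lying in $\mathcal{P}(p,t,(8c^{\#}K)^{-1},(8c^{\#}K^2)^{-1})$. Choose $s^{*}$ to be the latest such surgery time in the spatial tube over $\mathcal{B}_{g(t)}(p,(8c^{\#}K)^{-1})$, so the flow restricted to $(s^{*},t]$ over this tube is smooth and Theorem \ref{Ggradests} is available. The hypothesis $H<K$ on the surgery region combined with Proposition \ref{prop G}(i) gives $G(q^{*},s^{*})\le C_1 K$. Running the same two-leg integration as in part (i), but now with the scales $d(p,q^{*})\le (8c^{\#}K)^{-1}$ and $t-s^{*}\le (8c^{\#}K^2)^{-1}$, the gradient estimate propagates this bound to $G(p,t)<2K$, contradicting $G(p,t)\ge 2K$. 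The final ``in particular'' statement then follows immediately from a pointwise comparison: when $G(p,t)\ge 2K$, one checks that $d^{\#}\hat{r}_G(p,t)\le (8c^{\#}K)^{-1}$ and $d^{\#}\hat{r}_G^2(p,t)\le (8c^{\#}K^2)^{-1}$, so $\hat{\mathcal{P}}_G(p,t,d^{\#},d^{\#})$ is contained in the surgery-free neighbourhood (\ref{surgfreeG}), and the conclusion of (i) then applies.

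The main obstacle, and the reason $d^{\#}$ must be chosen with the awkward $(n-1)^2(n-2)^2$ factor, is the bootstrap hypothesis that Theorem \ref{Ggradests} only holds where $G\ge G^{\#}$. Along either leg of the integration path one must rule out $G$ dipping below $G^{\#}$ before the endpoint is reached. I would handle this exactly as in the proof of Lemma \ref{6.6HSforG}: define a first failure time along the path, derive the differential inequality up to that time, and verify that the resulting bound keeps $G$ strictly above $G^{\#}$, contradicting failure. Once this bootstrap is in place, everything else is routine integration.
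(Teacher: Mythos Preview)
Your proposal is correct and follows essentially the same approach as the paper: both arguments integrate the inequalities $|\nabla G|\le c^{\#}G^2$ and $|\partial_t G|\le c^{\#}G^3$ along a spatial leg and a temporal leg, prove part (ii) by contradiction from the latest surgery time, handle the bootstrap $G\ge G^{\#}$ by a first-failure argument, and then verify the containment $\hat{\mathcal P}_G(p,t,d^{\#},d^{\#})\subset\mathcal P(p,t,(8c^{\#}K)^{-1},(8c^{\#}K^2)^{-1})$. The only cosmetic differences are that the paper proves (ii) before (i) and takes $G(q,s{+})\le K$ directly at the surgery rather than passing through $H$ and Proposition~\ref{prop G}(i).
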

	
	\begin{proof}
		Firstly we prove part (ii).
		
		Suppose the neighbourhood of in (\ref{surgfreeG}) is modified by surgeries. Take a point $(q,s)$ which is modified by surgery, with $s$ the maximal time at which we can find such a point. Then by assumption we have $G(q,s+)\leq K$. Integrating the estimate on $\partial_tG$ from Theorem \ref{Ggradests},
		\begin{align*}
		&\int_{s}^{t}\frac{\partial G}{G^3}\leq \int_{s}^{t} c^\# \partial t\\
		-\frac{1}{G^2(q,t)}&+\frac{1}{G^2(q,s)}\leq c^\#(t-s)\\
		\frac{1}{G^2(q,t)}&\geq\frac{1}{G^2(q,s)}-2c^\#(t-s)\geq \frac{3}{4K^2}
		\end{align*}
		where in the last line we used our assumption on $H(q,s)$ and that $t-s\leq\frac{1}{8c^\#K^2}$.
		Then we integrate along a geodesic from $q$ to $p$ at time $t$ and use the estimate on $\nabla G$ 
		\begin{align*}
		\frac{1}{G(p,t)}\geq\frac{1}{G(q,t)}-c^\#d_{g(t)}(p,q)\geq\frac{\sqrt{3}-1/4}{2K}>\frac{1}{2K},
		\end{align*}
		where in the last line we used our estimate on $G^2(q,t)$ and that $d_{g(t)}\leq\frac{1}{8c^\#K}$. This contradicts our assumption  that $G(p,t)\geq 2K$. In this argument we have had to assume that $G\geq G^\#$ along the integration paths in order to apply the results of Theorem \ref{Ggradests}. If this is not true, we can choose the last point along the path with $G\leq G^\#$ and integrate from that point on, obtaining a contradiction in the same way. Now we use the definition of $d^\#$ to see that $\hat{\mathcal{P}}_G(p,t,d^\#,d^\#)$ is contained in the neighbourhood (\ref{surgfreeG}),
		\begin{align*}
		&\frac{(n-1)(n-2)}{2G(p,t)}d^\#\leq \frac{1}{8K(n-1)(n-2)c^\#}\leq\frac{1}{8c^\#K}\\
		\text{and}\;&\frac{(n-1)^2(n-2)^2}{2G(p,t)^2}d^\#\leq\frac{1}{16K^2c^\#}\leq\frac{1}{8c^\#K^2}.
		\end{align*}	
		Therefore $\hat{\mathcal{P}}_G(p,t,d^\#,d^\#)$ does not contain surgeries and part $(i)$ can be applied to this neighbourhood.
		
		To prove part (i), we integrate the same inequalities and use the assumption that $\mathcal{\hat{P}}_G$ is surgery free.
	\end{proof}

	\begin{lemma}[Neck Detection Lemma]\label{neckdetectionG}
		Let $\mathcal{M}_t, t\in[0,T)$ be the G-flow with surgeries, starting from an initial manifold $\mathcal{M}_0$. Let $\epsilon,\theta,L>0$, and $k\geq k_0$ be given (where $k_0\geq2$ is the parameter measuring the regularity of the necks where surgeries are performed). Then we can find $\eta_0, G_0$ with the following property. Suppose that $p_0\in\mathcal{M}_0$ and $t_0\in[0,T)$ are such that:
		\begin{enumerate}[(ND1)]
			\item $G(p_0,t_0)\geq G_0,\frac{\lambda_1(p_0,t_0)}{G(p_0,t_0)}\leq\eta_0$
			\item The neighbourhood $\hat{\mathcal{P}}(p_0,t_0,L,\theta)$ does not contain surgeries.
		\end{enumerate}
		Then
		\begin{enumerate}[(i)]
			\item The neighbourhood $\hat{\mathcal{P}}(p_0,t_0,L,\theta)$ is an $(\epsilon,k_0-1,L,\theta)$-shrinking curvature neck;
			\item The neighbourhood $\hat{\mathcal{P}}(p_0,t_0,L-1,\theta/2)$ is an $(\epsilon,k,L-1,\theta/2)$-shrinking curvature neck.
		\end{enumerate}
		With constants $\eta_0(\alpha,\epsilon,k,L,\theta)$.
	\end{lemma}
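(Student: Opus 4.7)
The plan is to run the standard contradiction–compactness argument of Section 7 of \cite{huisken2009mean}, now adapted to $G$-flow using the adjusted gradient estimate Theorem \ref{Ggradests} and the backward parabolic neighbourhood control from Lemma \ref{Ghash}. Suppose the conclusion fails; then we obtain a sequence of $G$-flows with surgeries, points $(p_i,t_i)$, and parameters $\eta_i\to 0$, $G_i\to\infty$ such that $G(p_i,t_i)\geq G_i$, $\lambda_1(p_i,t_i)/G(p_i,t_i)\leq \eta_i$, and $\hat{\mathcal{P}}_G(p_i,t_i,L,\theta)$ is surgery-free, yet none of the neighbourhoods $\hat{\mathcal{P}}_G(p_i,t_i,L,\theta)$ is an $(\epsilon,k_0-1,L,\theta)$-shrinking curvature neck.

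First I would rescale each flow parabolically so that $G(p_i,t_i)=\tfrac{(n-1)(n-2)}{2}$ (equivalently $\hat{r}_G(p_i,t_i)=1$) and translate the basepoint to $(0,0)$ in space-time. By Lemma \ref{Ghash}(i) the rescaled $G$ is pinched between two fixed constants on $\hat{\mathcal{P}}_G(p_i,t_i,L,\theta)$. Combined with Proposition \ref{prop G}(i), the cylindrical estimate Theorem \ref{cylindricalG}, and two-convexity this gives a uniform bound on $|A|$ throughout the rescaled neighbourhood. Theorem \ref{gradestG} then yields uniform bounds on $|\nabla h|$ and $|\nabla^2 h|$. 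These are enough to apply a standard Arzel\`a–Ascoli / Cheeger–Gromov-type compactness theorem, after passing to a subsequence, to obtain a smooth $C^{k_0-1}$ limit flow $\hat{\mathcal{M}}_\infty$ defined on the limit neighbourhood.

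To identify the limit, I would use the convexity estimate Theorem \ref{convexityG}: after rescaling it reads $\lambda_1\geq \delta G - C/G(p_i,t_i)$, and in the limit $\lambda_1\geq 0$ everywhere, while at the basepoint $\lambda_1=0$ by assumption $(ND1)$. The strong maximum principle applied to $\lambda_1/G$ (using the linearised evolution derived from Lemma \ref{lemmaevol}, which is uniformly parabolic on the approximate cylinder by Proposition \ref{prop G}(ii)) forces $\lambda_1\equiv 0$ on the connected component, so the limit splits an $\mathbb{R}$-factor. Combining this with the cylindrical estimate, which on the limit becomes $H=\tfrac{(n-1)^2(n+2)}{4}G$ at points with $\lambda_1=0$, pins the cross-section down to a round $(n-1)$-sphere, and the $G$-flow equation then determines the temporal evolution as a homothetically shrinking cylinder of unit radius. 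For large enough $i$ this produces the desired $(\epsilon,k_0-1,L,\theta)$-neck structure and yields the required contradiction, proving part (i).

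For part (ii), the higher regularity statement, I would combine the $C^{k_0-1}$ closeness from (i) on the full neighbourhood with interior parabolic regularity for the fully nonlinear equation satisfied by $h_{ij}$ (Lemma \ref{lemmaevol} (iv)) on the slightly smaller neighbourhood $\hat{\mathcal{P}}_G(p_0,t_0,L-1,\theta/2)$. Since the limit cylinder makes the linearised operator uniformly parabolic and of Krylov--Safonov type, a standard Schauder bootstrap upgrades the convergence from $C^{k_0-1}$ to $C^k$ on the interior neighbourhood, yielding (ii). The main technical obstacle is the first step: ensuring that Theorem \ref{Ggradests} really does give smooth-enough subsequential convergence, and that the strong maximum principle application to $\lambda_1/G$ goes through despite $G$ being a fully nonlinear (rather than linear) speed; both are handled exactly as in \cite{huisken2009mean} thanks to Proposition \ref{prop G}(ii), which gives the uniform ellipticity needed.
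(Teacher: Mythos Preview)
Your contradiction--rescaling--compactness strategy is the same as the paper's, and the use of the convexity and cylindrical estimates to identify the limit is the right idea. Three technical points separate your sketch from a complete argument, and the paper handles each explicitly. First, Lemma \ref{Ghash}(i) only pinches $G$ on neighbourhoods of scale at most $d^\#$, not on the full $\hat{\mathcal{P}}_G(p_i,t_i,L,\theta)$ when $L>d^\#$; the paper therefore first proves cylindrical convergence on the small neighbourhood $\bar{\mathcal{P}}(0,0,d,d)$ and then iterates outward in steps of size $d$, using the already-established neck structure at each stage to propagate curvature bounds, until the full $(L,\theta)$-neighbourhood is covered. Second, the paper does not apply the strong maximum principle to $\lambda_1/G$, which is not smooth where eigenvalues coincide, but instead to the smooth symmetric quotient $\tilde Q_n=\tilde S_n/\tilde S_{n-1}$ as in \cite{huisken1999convexity}; this vanishes exactly when $\lambda_1=0$ and satisfies a genuine parabolic equation to which the strong maximum principle applies. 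Third, the cylindrical estimate in the limit gives only the inequality $H\leq\tfrac{(n-1)^2(n+2)}{4}G$, not equality; the paper closes this by a Lagrange-multiplier computation showing that, subject to $\lambda_1=0$ and $H=1$, the function $G$ is maximised precisely when the remaining $\lambda_i$ are all equal, which forces $G=C(n)H$ on the limit. This last identity means the limit $G$-flow is literally mean curvature flow up to a constant factor, so the classification of the limit as a shrinking round cylinder can be quoted directly from \cite{huisken1993local} and \cite{huisken2009mean} rather than re-derived.
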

	
	\begin{proof}
		Here we argue by contradiction. Suppose that for some values of $\epsilon,L,\theta$ the conclusion doesn't hold. No matter how we pick $\eta_0$ or $G^*$. We take a sequence $\{M_t^j\}_{j\geq1}$ of solutions to the flow. A sequence of times $t_j$ and points $p_j$ such that $,H_j,G_j,\lambda_{1,j}$ denote the values of $H,G$ and $\lambda_1$ at $F_j(p_j,t_j)\in M_{t_j}^j$ satisfy,
		
		\begin{enumerate}[(i)]
			\item $\lambda_1+\lambda_2\geq\alpha_0 G$, this is clear from (\ref{Gineqs}).
			\item parabolic nbhd not changed by surgery
			\item $G_j\to\infty,\;\frac{\lambda_{1,j}}{G_j}\to0$ as $j\to\infty$.
			\item $(p_j,t_j)$ doesn't lie at the centre of an $(\epsilon,k_0-1,L,\theta)$-shrinking neck.
		\end{enumerate}
		$G_j\to\infty$ since curvature of the flows uniformly bounded at $t=0$. We now continue with a parabolic rescaling such that, $H(p_j,t_j)=1$ and $(p_j,t_j)$ is translated to the origin $0\in\mathbb{R}^{n+1}$ and $t_j$ becomes $0$, we define it as
		\begin{align*}
		\tilde{F}(p,\tau)=\frac{1}{H_j}[F_j(p,\hat{r}^2\tau+t_j)-F(p_j,t_j)]
		\end{align*}
		
		denoting by $\bar{\mathcal{M}}_\tau^j$ our rescaled surface.
		
		Our gradient estimates give bounds on $|A|$ and derivatives up to $k_0$.  This tells us we have a limit surface to the flow $\tilde{M}_\tau^{\infty}$.
		
		Passing to the limit in the convexity estimate from Lemma ($\ref{convexityG}$) we find that $\tilde{\lambda}_1\geq 0$ which implies that $\tilde{\lambda}_i\geq0$ for all $i=1,...n$. On the other hand we know that $\lambda_1+\lambda_2\geq0$ so $\tilde{\lambda}_j>0$ for $j\geq2$ and $\tilde{S}_i>0$ for all $i\leq n-1$, where $S_i$ is defined in \cite{huisken1999convexity}. This together with $(iii)$ tells us $\tilde{\lambda}_1(0,0)=0$.

		Looking at the quotient $\tilde{Q}_n=\frac{\tilde{S}_n}{\tilde{S}_{n-1}}$ we know it is non negative everywhere. Arguing as in \cite{huisken1999convexity} and using the strong maximum principle, we know if it is positive somewhere in the interior of $\tilde{P}^{\infty}(0,0,d,d)$, it is positive everywhere at all later times. But $\tilde{Q}_n(0,0)=0$, showing that $\tilde{\lambda}_1\equiv0$ in this set.

		Scaling by $H$ I know that my principal curvatures on a cylinder are $\lambda_1=0$ and $\lambda_j=\frac{1}{n-1}$ for all $j\geq2$. 
		\begin{align*}
		\Rightarrow H=1\;\Rightarrow G&=((n-1)(n-1)+\frac{(n-1)(n-2)}{2}\frac{(n-1)}{2})^{-1}\\
		&=(\frac{(n-1)^2(n+2)}{4})^{-1}.
		\end{align*}
		On a cylinder $H-\frac{((n-1)^2(n+2)}{4}G=0$. I know passing to the limit in my cylindrical estimate yields $H-\frac{(n-1)^2(n+2)}{4}G\leq0\;\Rightarrow G\geq\frac{4}{(n-1)^2(n+2)}$, we want to see that $\frac{4}{(n-1)^2(n+2)}$ is the maximum value $G$ can attain.
		
		We want to see, $G(0,a_1,...,a_{n-1})\leq G(0,\frac{1}{n-1},...,\frac{1}{n-1})$ with equality when the $a_i$'s are equal.
		
		Picking from $(0,a_1,...,a_{n-1})$ we know,
		\begin{align*}
		G^{-1}=\frac{1}{a_1}+\cdots+\frac{1}{a_{n-1}}+\sum\frac{1}{a_i+a_j}+\lambda(1-a_1+\cdots+a_{n-1})
		\end{align*}
		where $\lambda$ is the Lagrange multiplier with the constraint that the sum of our $a_i's=1$.
		Taking partial derivatives we see that for any two $a_i,a_k$,
		\begin{align*}
		\frac{\partial G^{-1}}{\partial a_i}&=-\frac{1}{a_i^2}-\sum_{j\neq i}\frac{1}{(a_i+a_j)^2}-\lambda=0\\
		\frac{\partial G^{-1}}{\partial a_k}&=-\frac{1}{a_k^2}-\sum_{j\neq k}\frac{1}{(a_k+a_j)^2}-\lambda=0\\
		\Rightarrow -\frac{1}{a_i^2}&+\frac{1}{a_k^2}-\sum_{j\neq i,k}\frac{1}{(a_i+a_j)^2}+\sum_{j\neq i,k}\frac{1}{(a_k+a_j)^2}=0\\
		\end{align*}
		$\Rightarrow \;a_i=a_k$ and all $a_i's$ are equal. Therefore we have a maximum when they are equal, on the boundary we will have a minimum. This tells me that $G=C(n)H$ for some constant $C=C(n)$, so on the limit I have that my fully nonlinear $G$-flow is the same as mean curvature flow. This allows me to argue as in [\cite{huisken2009mean}, Lemma 7.4] and [\cite{huisken1993local}, Theorem 5.1] to obtain that $\tilde{\mathcal{M}}_t^{\infty}$ on $\tilde{\mathcal{P}}^{\infty)}(0,0,d,d)$ is a portion of a shrinking cylinder.

		Now we can iterate the procedure to that the whole neighbourhoods $\bar{\mathcal{P}}^{\infty}_j(0,0,L,\theta)$ of the rescaled flow converge to a cylinder. From the first step we know that, for $j$ large enough, the curvature on $\bar{\mathcal{P}}_j^{\infty}(0,0,d,d)$ is close to the curvature of a unit cylinder. Then, using the gradient estimates we have uniform bounds on $\bar{G}_j$ also on some larger neighbourhoods, i.e. $\bar{\mathcal{P}}_j^{\infty}(0,0,2d,2d)$, we can repeat the previous argument to prove convergence to a cylinder there. After a finite number of iterations we can obtain convergence of the neighbourhoods $\bar{\mathcal{P}}_j^{\infty}(0,0,L,\theta)$. The immersions converge in the $C^{k_0-1}$-norm and this ensures that for $j$ large enough, the neighbourhoods are $(\epsilon,k_0-1,L,\theta)$-shrinking necks. This contradicts assumption $(iv)$ and proves part $(i)$ of the lemma.

				To prove part $(ii)$ of the lemma we argue in a similar fashion. Again we argue by contradiction and take a sequence of rescaled flows. Consider the smaller parabolic neighbourhoods $\bar{\mathcal{P}}(0,0,L-1,\frac{\theta}{2})$ and apply interior regularity results from \cite{ecker1991interior} to find bounds in the $C^{k+1}$ norm as well. This yields compactness in the $C^k$-norm, which yields the desired result.
	\end{proof}
	
	\begin{remark}
		Part $(i)$ of the lemma concerns the whole parabolic neighbourhood which is surgery free, but can be arbitrarily close to surgery, the points of the neighbourhood are even allowed to be modified by a surgery at the initial time $t_0-\theta r_0^2$. Therefore the description goes up to $k_0-1$ derivatives. Part $(ii)$ is concerned with a smaller parabolic neighbourhood, where we can use interior parabolic regularity and as many derivatives as we wish.
	\end{remark}
	
	\begin{corollary}
		Given $\epsilon,\theta>0, L\geq 10$ and $k>0$ an integer, we can find $\eta_0, H_0>0$ such that the following holds. Let $p_0,t_0$ satisfy $(ND1)$ and $(ND2)$ of Lemma \ref{neckdetectionG}. Then
		\begin{enumerate}[(i)]
			\item The point $(p_0,t_0)$ lies at the centre of a cylindrical graph of length $2(L-2)$ and $C^{k+2}$-norm less than $\epsilon$;
			\item The point $(p_0,t_0)$ lies at the centre of a normal $(\epsilon,k,L-2)$-hypersurface neck.
		\end{enumerate}
	\end{corollary}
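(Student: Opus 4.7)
The strategy is to upgrade the curvature neck produced by Lemma \ref{neckdetectionG} to a geometric (graphical) cylindrical description, following the pattern of Corollary 7.4 in \cite{huisken2009mean}. The loss from $L$ to $L-2$ will be absorbed in the radial integration used to construct the graph and to locate its axis, and since $H$ and $G$ are comparable by Proposition \ref{prop G}, the thresholds $G_0$ and $H_0$ differ only by a dimensional constant.

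First, I apply Lemma \ref{neckdetectionG}(ii) with an auxiliary $\epsilon' = \epsilon'(\epsilon,n,k,L) > 0$ chosen sufficiently small and order $k+2$ in place of $k$; adjusting $\eta_0$ and $G_0$ accordingly, assumptions (ND1) and (ND2) still hold and we conclude that $\hat{\mathcal{P}}(p_0,t_0,L-1,\theta/2)$ is an $(\epsilon',k+2,L-1,\theta/2)$-shrinking curvature neck. This means every covariant derivative of $h_{ij}$ up to order $k+2$, rescaled by the appropriate power of $\hat{r}_G(p_0,t_0)$, is within $\epsilon'$ of its value on the unit round cylinder; in particular, at time $t_0$ the intrinsic geometry on the ball $\mathcal{B}_{g(t_0)}(p_0,(L-1)\hat{r}_G)$ is pointwise $C^{k+2}$-close to that of a cylinder of radius $\hat{r}_G(p_0,t_0)$.

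To obtain (i), I identify the cylinder axis using the unit eigenvector $e_1$ of $h_{ij}$ corresponding to $\lambda_1$. By (ND1) and the convexity/cylindrical estimates, the eigenvalue $\lambda_1$ is strictly separated from the remaining $\lambda_j$, so $e_1$ is well-defined up to sign, and its covariant derivatives are bounded by $C\epsilon'$. Integrating the vector field $e_1$ through $p_0$ for intrinsic arclength up to $(L-2)\hat{r}_G(p_0,t_0)$ on each side, standard ODE comparison shows its image curve in $\mathbb{R}^{n+1}$ remains within distance $C\epsilon'\hat{r}_G$ of a straight line segment; the $(n-1)$-sphere bundle orthogonal to $e_1$ then realises $\mathcal{M}_{t_0}$ as a graph over that straight cylinder. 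The Gauss--Weingarten relations convert the curvature neck bounds into $C^{k+2}$-norm bounds on the graph function of size $C\epsilon'$, and choosing $\epsilon'$ small enough yields the $C^{k+2}$-norm less than $\epsilon$ required in (i). Part (ii) is a direct reformulation: a normal $(\epsilon,k,L-2)$-hypersurface neck is by definition a graph of this type with matching curvature estimates, both of which have now been established.

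The main obstacle is controlling the integral curve of $e_1$ over the entire axis length $2(L-2)\hat{r}_G(p_0,t_0)$. Small pointwise errors in the direction of $e_1$ can compound over the integration, which is why one needs the curvature neck bounds at order $k+2$ (rather than just $k$) and on the slightly larger neighbourhood of radius $L-1$; this is precisely the role of the parameter slack chosen above and the source of the loss of one unit of length at each end. A secondary subtlety is ensuring that the sign ambiguity of $e_1$ can be resolved continuously along the curve, which follows from the strict separation $\lambda_1 \ll \lambda_2$ guaranteed by (ND1) together with the connectedness of $\mathcal{B}_{g(t_0)}(p_0,(L-1)\hat{r}_G)$.
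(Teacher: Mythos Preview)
Your argument and the paper's both start the same way: apply part~(ii) of Lemma~\ref{neckdetectionG} with adjusted parameters $(\epsilon',k')$ to place $(p_0,t_0)$ at the centre of an $(\epsilon',k',L-1,\theta/2)$-shrinking curvature neck. Where you diverge is in the second step. The paper simply invokes Proposition~3.5 and Theorem~3.14 of \cite{huisken2009mean} as black boxes: those results say precisely that an $(\epsilon',k',L-1)$-curvature neck yields a cylindrical graph of length $2(L-2)$ with small $C^{k+2}$-norm and, in turn, a normal $(\epsilon,k,L-2)$-hypersurface neck. You instead sketch the construction by hand, integrating the $\lambda_1$-eigenvector field to locate the axis and then building the graph over it. That is a legitimate route and is in spirit what underlies the cited Huisken--Sinestrari results, so your strategy is sound; the paper's approach is just shorter because the heavy lifting has already been done in \cite{huisken2009mean} and carries over verbatim since the conclusion is a statement about the geometry of a single time-slice.

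One point to tighten: your claim that part~(ii) is ``a direct reformulation'' of part~(i) understates what is needed. A normal hypersurface neck in the sense of \cite{huisken2009mean} requires a specific normalised parametrisation (harmonic axial coordinate, cross-sections with prescribed conformal structure, etc.), not merely a small-$C^{k+2}$ graph over some cylinder. Passing from the graph to the normal form is exactly the content of Theorem~3.14 in \cite{huisken2009mean}, and it is not automatic from your axis construction alone. If you want to avoid citing that theorem, you would need to carry out the normalisation explicitly; otherwise it is cleaner to invoke it, as the paper does.
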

	
	\begin{proof}
		Using Proposition 3.5 and Theorem 3.14 from \cite{huisken2009mean} both properties are true is $(p_0,t_0)$ lies at the centre of an $(\epsilon',k',L-1)$-curvature neck for suitable $\epsilon',k'$. Therefore it is clear that the properties hold if $(p_0,t_0)$ lies at the centre of an $(\epsilon',k',L-1,\frac{\theta}{2})$-shrinking curvature neck. Thus is suffices to apply part $(ii)$ of the neck detection lemma with parameters $(\epsilon',k',L,\theta)$ and use the corresponding values of $\eta_0, G_0$.
	\end{proof}
	
	The following lemma, is an immediate consequence of the neck detection lemma. It shows that the shrinking curvature neck given by that lemma can be represented at each time as a hypersurface neck. This can be done at all times, including the initial time which could coincide with a surgery time.
	
	For this we will introduce the following. We say that a point $(p,t)$ lies at the centre of a neck if $p\in\mathcal{M}$ lies at the centre of a neck with respect to the immersion $F(\cdot,t)$. We introduce a time-dependent version of the notion of curvature neck. For $s\leq0$ we set,
	\begin{align}\label{rho}
	\rho(r,s)=\sqrt{r^2-2(n-1)s}
	\end{align}
	so that $\rho(r,s)$ is the radius at time $s$ of a standard $n$-dimensional cylinder evolving by the fully nonlinear flow having radius $r$ at time $s=0$. Moreover we have
	\begin{align}\label{curvneckrho}
	r\leq\rho(r,s)\leq2r\;\text{for all}\;s\in[d^\#r^2,0]
	\end{align}
	otherwise the cylinder would violate Lemma \ref{Ghash}.
	\begin{lemma}\label{7.9Huisken}
		In Lemma \ref{neckdetectionG}, we can choose the constants $\eta_0,h_0$ so that the additional following property holds. Suppose that $L\geq10$ and that $\theta\leq d^\#$. Denote
		\begin{align*}
		r_0=\frac{(n-1)(n-2)}{2G(p,t)},\;B_0=B_{g(t_0)}(p_0,r_0L).
		\end{align*}
		Then for any $t\in[t_0-\theta r_0^2,t_0]$, the point $(p_0,t)$ lies at the centre of an $(\epsilon,k_0-1)$-hypersurface neck $N_t\subset B_0$, satisfying the following properties:
		\begin{enumerate}[(i)]
			\item The mean radius $r(z)$ of every cross section of $N_t$ is equal to $\rho(r_0,t-t_0)(1+O(\epsilon))$;
			\item The length of $N_t$ is at least $L-2$;
			\item There exists a unit vector $\omega\in\mathbb{R}^{n+1}$ such that $|\nu(p,t)\cdot\omega|\leq\epsilon$ for any $p\in N_t$.
		\end{enumerate}
	\end{lemma}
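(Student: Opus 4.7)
The plan is to feed the Neck Detection Lemma \ref{neckdetectionG} into the translation machinery of Proposition 3.5 and Theorem 3.14 of \cite{huisken2009mean} at each fixed time slice. First I would choose, inside the proof of Lemma \ref{neckdetectionG}, parameters $\epsilon'\ll\epsilon$ and $k'\geq k_0$ so that an $(\epsilon',k',L,\theta)$-shrinking curvature neck is guaranteed by (ND1)--(ND2); the smallness of $\epsilon'$ has to be sufficient to absorb the constants that appear in the curvature-neck to hypersurface-neck translation. With these parameters, part $(i)$ of Lemma \ref{neckdetectionG} gives, after parabolic rescaling by $G(p_0,t_0)$, a $C^{k_0-1}$-close approximation of each time slice of $\hat{\mathcal{P}}(p_0,t_0,L,\theta)$ by the corresponding time slice of the standard shrinking cylinder solution of the $G$-flow.

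Now fix $t\in[t_0-\theta r_0^2,t_0]$. In the rescaled flow, the slice at rescaled time $\tau=(t-t_0)/r_0^2$ is, by the definition of $\rho$ in (\ref{rho}), $C^{k_0-1}$-close to a round cylinder of radius $\rho(r_0,t-t_0)/r_0$. Unrescaling, the time slice $\mathcal{M}_t\cap B_0$ is $C^{k_0-1}$-close to a round cylinder of radius $\rho(r_0,t-t_0)$. Applying Proposition 3.5 and Theorem 3.14 of \cite{huisken2009mean}, with $\epsilon'$ chosen small enough, converts this closeness into the existence of a normal hypersurface neck $N_t$ centred at $(p_0,t)$ with small $C^{k_0-1}$-norm, which directly establishes property $(i)$ on the mean radius of each cross section.

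Property $(iii)$ is then obtained by choosing $\omega$ to be the axis direction of the approximating cylinder at time $t$; the $C^{k_0-1}$-closeness in particular forces $|\nu(\cdot,t)\cdot\omega|\leq\epsilon$ pointwise on $N_t$ after absorbing constants into $\epsilon'$. For property $(ii)$, the ball $B_0$ has intrinsic radius $r_0 L$ with respect to $g(t_0)$, and the evolution equation \ref{evolg} together with the curvature bound from Lemma \ref{Ghash} $(i)$ shows that $g(t)$ is comparable to $g(t_0)$ on $B_0$; the estimate (\ref{curvneckrho}) bounds $\rho(r_0,t-t_0)\leq 2r_0$, so the length of $N_t$ measured in units of its own mean radius is at least $r_0L/(2r_0)-O(\epsilon)\geq L/2\geq L-2$ once $L\geq 10$ and $\epsilon$ is small.

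The main obstacle is the bookkeeping of constants, so that a single choice of $\eta_0, G_0$ works uniformly across the whole time interval $[t_0-\theta r_0^2,t_0]$ while simultaneously producing the required $\epsilon$ in the final hypersurface neck after passing through the two translation theorems. A subtler point is the endpoint $t=t_0-\theta r_0^2$, which may coincide with a surgery time: here we only have $C^{k_0-1}$ regularity from part $(i)$ of Lemma \ref{neckdetectionG} (and not the higher regularity of part $(ii)$), so the hypersurface neck must be stated in the $(\epsilon,k_0-1)$ class, which matches the statement of the present lemma.
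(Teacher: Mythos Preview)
Your overall strategy---apply Lemma \ref{neckdetectionG} with a smaller $\epsilon'$ so that each time slice is $C^{k_0-1}$-close to the exact shrinking cylinder, then read off properties (i)--(iii)---is the same as the paper's. Properties (i) and (iii) are handled correctly.

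The gap is in property (ii). Your chain of inequalities ends with $L/2\geq L-2$ ``once $L\geq 10$'', but this is false: for $L=10$ you are asserting $5\geq 8$. The crude estimate that produced the $L/2$ (divide the $g(t_0)$-radius $r_0L$ of $B_0$ by the upper bound $2r_0$ on the mean radius) cannot be sharpened into $L-2$ by bookkeeping alone; you are losing a genuine factor of $2$.

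What is missing is the explicit geometry of a ball on a cylinder, which the paper (following \cite{huisken2009mean}) uses. On the unit cylinder $S^{n-1}(r_0)\times\mathbb{R}$ the intrinsic distance from $p_0$ to a point with axial coordinate $z$ is at most $\sqrt{\pi^2 r_0^2+z^2}$, so the ball $B_0$ of radius $r_0L$ contains the full sub-cylinder $S^{n-1}\times[-(L-1)r_0,(L-1)r_0]$ once $L\geq(\pi^2+1)/2$. Under the shrinking-cylinder evolution the axial coordinate is unchanged, so at time $t$ this same region is still a full sub-cylinder with \emph{physical} axial extent $2(L-1)r_0$. Measured in units of the mean radius $\rho(r_0,t-t_0)$, its length is $2(L-1)r_0/\rho\geq L-1$ by (\ref{curvneckrho}), which gives the required $L-2$. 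In short, you need to track the axial extent of $B_0$ (which is time-independent) rather than its metric radius (which is not), and this is exactly what your metric-comparability argument fails to capture.
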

	
	\begin{proof}
		Proved in the same way as part $(i)$ of Lemma \ref{neckdetectionG}. By contradiction, we take a suitable $\eta_0,G_0$, then our parabolic neighbourhood is as close as we wish to an exact cylinder evolving by the fully nonlinear flow over the same time interval. The cylinder has radius $r_0$ at the final time, hence it has radius $\rho(r_0,t-t_0)$ at previous times.
		
		At the final time, $C_{t_0}$ is a neighbourhood of radius $r_0L$ of $p_0$. Let $B_L\subset C$ be the set of points of $C$ having intrinsic distance less than $L$ from $p_0$. Clearly, $B_L$ cannot be written in the form $S^{n-1}\times[a,b]$ for any $a,b,$. However, it is easy to see that for $L\geq(\pi^2+1)/2$, then $S^{n-1}\times[-(L-1),(L-1)]\subset B_L\subset S^{n-1}\times[-L,L]$. Using this logic, we can see $C_{t_0)}$ contains a sub-cylinder of length $2(L-1)$. The same sub-cylinder is contained in $C_t$ for $t<t_0$; however since the scaling factor is given by $\rho(r_0,t-t_0)$ rather than $r_0$, then length of the sub-cylinder becomes $\frac{2r_0(L-1)}{\rho(r_0,t-t_0)}$. Recalling $(\ref{curvneckrho})$, we see the sub-cylinder has length at least $2(L-1)$ for the times under consideration. Since we can make our parabolic neighbourhood as close as we wish in the $(k_0-1)$-norm to the cylinder $C_t$ we can find a geometric neck parametrizing the part of the neighbourhood corresponding to the sub-cylinder found above, and this neck will satisfy properties $(i)$ and $(ii)$. Property $(iii)$ follows from choosing $\omega$ to be the axis of our cylinder $C_t$.
	\end{proof}

	Just as before in the mean curvature flow case, we rely on $(ND2)$ and have seen it is crucial in the proof of the neck detection lemma. The next result shows that $(ND2)$ follows from the other assumptions of the neck detection lemma provided the curvature at $(p_0,t_0)$ is large enough compared to the curvature of the regions changed by the previous surgeries.
	
	\begin{lemma}
		Consider a flow with surgeries satisfying the same assumptions of Lemma \ref{neckdetectionG}. Let $d^\#$ be as before and let $\epsilon,k,L,\theta$ be given with $\theta< d^\#$. Then we can find $\eta_0,G_0$ with the following property. Let $(p_0,t_0)$ be any point satisfying
		\begin{align*}
		G(p_0,t_0)\geq\max\{G_0,5k\},\;\frac{\lambda_1(p_0,t_0)}{G(p_0,t_0)}\leq\eta_0
		\end{align*}
		where $K$ is the maximum of the curvature at the points changed in the surgeries at times before $t_0$. Then $(p_0,t_0)$ satisfies hypothesis $(ND2)$ and the conclusions $(i)-(ii)$ of Lemma \ref{neckdetectionG}. In addition, the neighbourhood
		\begin{align*}
		\mathcal{P}(p_0,t_0,\frac{(n-1)(n-2)}{2G(p_0,t_0)}L,\frac{(n-1)^2(n-2)^2}{4K^2}d^\#),
		\end{align*}
		which is larger in time than $(ND2)$ does not contain surgeries. 
	\end{lemma}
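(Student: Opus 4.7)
The plan is to reduce this to two results already proved: the integrated gradient estimate in Lemma \ref{Ghash}(ii), which controls how fast surgery regions can be reached by travelling through high-$G$ regions, and the Neck Detection Lemma \ref{neckdetectionG} itself. Concretely, I will first use Lemma \ref{Ghash}(ii) with the given bound $G(p_0,t_0)\geq 2K$ (which is implied by $G(p_0,t_0)\geq 5K$, assuming the ``$5k$'' in the statement is a typo for $5K$) to exhibit an explicit surgery-free parabolic neighbourhood of $(p_0,t_0)$, then show that both the neighbourhood $\hat{\mathcal{P}}(p_0,t_0,L,\theta)$ appearing in (ND2) and the enlarged neighbourhood stated in the conclusion are contained in it. Once (ND2) is verified, the conclusions (i)-(ii) are just a direct invocation of Lemma \ref{neckdetectionG} with the constants $\eta_0$ and $G_0$ obtained there.

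The key containment is an elementary bookkeeping check. By Lemma \ref{Ghash}(ii), applied with the hypothesis $G(p_0,t_0)\geq 2K$, the parabolic neighbourhood $\mathcal{P}(p_0,t_0,(8c^\#K)^{-1},(8c^\#K^2)^{-1})$ is surgery-free. Using the definition $d^\#=(2(n-1)^2(n-2)^2c^\#)^{-1}$ we observe that the time-extent of the enlarged neighbourhood stated in the conclusion is
\[
\frac{(n-1)^2(n-2)^2 d^\#}{4K^2}=\frac{1}{8c^\#K^2},
\]
so it matches the time-extent from Lemma \ref{Ghash}(ii) exactly. For the spatial extent we need
\[
\frac{(n-1)(n-2)L}{2G(p_0,t_0)}\leq\frac{1}{8c^\#K},
\]
which holds provided $G(p_0,t_0)\geq 4L(n-1)(n-2)c^\#K$. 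Thus the correct statement requires $G_0$ to be chosen large enough relative to both $K$ and $L$; taking $G_0\geq C(L,n,c^\#)\,K$ (in particular at least $5K$) gives the enlarged surgery-free neighbourhood. Since $\theta<d^\#$, the same computation forces $\hat{\mathcal{P}}(p_0,t_0,L,\theta)\subset\mathcal{P}(p_0,t_0,(8c^\#K)^{-1},(8c^\#K^2)^{-1})$, so (ND2) is established.

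Once (ND2) holds and $G(p_0,t_0)\geq G_0$, $\lambda_1/G\leq\eta_0$ with $\eta_0,G_0$ taken as the constants produced by Lemma \ref{neckdetectionG} for the parameters $\epsilon,k,L,\theta$ (possibly enlarging $G_0$ so that the containment check above succeeds simultaneously), the neck detection lemma applies verbatim and yields conclusions (i) and (ii). The final assertion about the larger neighbourhood is already contained in the first step, since that is precisely the neighbourhood we exhibited as surgery-free.

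The main subtlety, rather than a genuine obstacle, is getting the dependence of $G_0$ on the parameters right: $\eta_0$ and the ``inner'' $G_0$ come from Lemma \ref{neckdetectionG} and depend on $\alpha,\epsilon,k,L,\theta$; however the containment step in the second paragraph forces us also to enlarge $G_0$ by a factor proportional to $LK$, so the final $G_0$ depends on $K$ through the hypothesis $G(p_0,t_0)\geq\max\{G_0,5K\}$ (which should in fact be $\max\{G_0,C_{L,n}K\}$). Once this dependence is tracked, the proof is essentially the direct application outlined above, mirroring Lemma 7.10 of \cite{huisken2009mean} with $H$ replaced throughout by $G$ and with the integrated estimates of Theorem \ref{Ggradests} supplying the required a priori control.
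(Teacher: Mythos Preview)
Your direct containment argument is clean but, as you yourself flag, it proves a weaker statement than the one asserted: fitting the spatial ball of radius $\tfrac{(n-1)(n-2)L}{2G(p_0,t_0)}$ inside the surgery-free neighbourhood $\mathcal{P}\bigl(p_0,t_0,(8c^\#K)^{-1},(8c^\#K^2)^{-1}\bigr)$ from Lemma~\ref{Ghash}(ii) forces $G(p_0,t_0)\geq 4(n-1)(n-2)c^\#L\,K$, so the constant multiplying $K$ grows linearly in $L$. The lemma as stated has the fixed factor $5$, independent of $L$, and that is what the paper actually proves; your proposal does not establish the lemma as written.

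The idea you are missing is a bootstrap (continuity) argument. The paper first arranges $\eta_0,G_0$ so that the neck detection lemma applies uniformly for every $L'\in[d^\#,L]$. If (ND2) failed, one takes the maximal $L'\in[d^\#,L)$ for which $\hat{\mathcal{P}}(p_0,t_0,L',\theta)$ is surgery-free (Lemma~\ref{Ghash}(ii) guarantees $L'\geq d^\#$). Neck detection on this smaller neighbourhood then forces $G=G(p_0,t_0)(1+O(\epsilon))\geq 4K$ throughout it, so Lemma~\ref{Ghash}(ii) applied at the boundary points yields an extra $d^\#$ of surgery-free room, contradicting the maximality of $L'$. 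The neck structure itself propagates the curvature lower bound across arbitrarily long necks; this is why no $L$-dependent factor in front of $K$ is needed. For the final enlarged neighbourhood the paper again uses the neck structure: every $q$ in the spatial ball has $G(q,t_0)>2K$, and Lemma~\ref{Ghash}(ii) is applied \emph{at each such $q$} (not only at $p_0$) to obtain the stated backward time interval. Your single-shot containment at $p_0$ cannot reach the full spatial extent without strengthening the hypothesis.
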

	
	\begin{proof}
		Let $\epsilon,k,L,\theta$ be given, with $\theta\leq d^\#$. The constants $\eta_0,G_0$ depend continuously on the parameters $L,\theta$ measuring the size of the parabolic neighbourhood. Thus, if $L_2>L_1$ and $\theta_2>\theta_1>0$ it is possible to find $\eta_0,H_0$ which apply to any $L\in[L_1,L_2]$ and $\theta\in[\theta_1,\theta_2]$. Thus we can find values $\eta_0,G_0$ such that the conclusions of the neck detection lemma hold for our choice of $(\epsilon,k,L,\theta)$, and also if we replace $L$ with any $L'\in[d^\#,L]$. In addition, we can also assume that $G_0\geq 2G^\#$. We claim that such values of $\eta_0,G_0$ satisfy the conclusions of the present lemma.
		
		By our choice of $\eta_0,G_0$, the conclusions don't hold only if $(ND2)$ is not satisfied, i.e.$\hat{\mathcal{P}}(p_0,t_0,L,\theta)$ contains surgeries.
		
		By Lemma \ref{Ghash}, at least the neighbourhood $\hat{\mathcal{P}}(p_0,t_0,d^\#,0)$ does not contain surgeries. Therefore, if $(ND2)$ s violated, there exists a maximal $L'\in[d^\#,L]$ such that $\hat{\mathcal{P}}(p_0,t_0,L,\theta)$ does not contain surgeries. We apply the neck detection lemma to this neighbourhood and deduce that it is an $(\epsilon,k_0-1,L',\theta)$-shrinking neck. In particular $G(p_0,t_0)(1+O(\epsilon))\geq 4K$ for all $p$ such that $d_{g(t_0)}(p_0,p)\leq\hat{r}(p_0,t_0)d^\#$. But then Lemma \ref{Ghash} shows that the larger neighbourhood $\hat{\mathcal{P}}(p_0,t_0,L'+d^\#,\theta)$ does not contain surgeries as well, contradicting the maximality of $L'$. This proves $(ND2)$ holds and that the neck detection lemma can be applied to the whole neighbourhood $\hat{\mathcal{P}}(p_0,t_0,L,\theta)$.
		
		To obtain the last claim, take any $q$ such that
		\begin{align*}
		d_{g(t_0)}(q,p_0)\leq\frac{(n-1)(n-2)}{2G(p_0,t_0)}L.
		\end{align*}
		
		By the previous part of the statement, $G(q,t_0)=G(p_0,t_0)(1+O(\epsilon))>2K$. Then, Lemma \ref{Ghash}(ii) implies that $q$ has not been affected by any surgery between time $t_0-(n-1)^2d^\#/K^2$ and $t_0$. Since this holds for any $q$ in the neighbourhood, the statement is proved.
	\end{proof}
	
	In the next results we assume that our flow with surgeries satisfies certain properties. 
	\begin{enumerate}[(g1)]
		\item The surgeries take place in a region where $G$ is approximately equal to some fixed value $K^*$. More precisely, there is $K^*>2G^\#$ such that each surgery is performed at a cross section $\Sigma_{z_0}$ of a normal neck with $r(z_0)=r^*$, where $r^*=\frac{n-1}{K^*}$.
		\item The two portions of a normal neck resulting from surgery have the following properties. One portion belongs to a component of the surface which will be removed after the surgery. In the other portion, the part of the neck that has been left unchanged by the surgery will have the following structure: the first cross section which coincides with the boundary of the region changed by the surgery the mean radius satisfies $r(z)\leq(1-/11)r*$, on the last section $r(z)\geq 2r^*$ and in the sections in between $r^*\leq r(z)\leq 2r^*$.
		\item Each surgery is essential for removing a region of the surface with curvature greater than $10K^*$. That is, if we consider any of the surgeries performed at a given surgery time, we can find a component removed afterwards which contains some point with $G\geq 10K^*$, and which would not have been disconnected from the rest of the surface without the surgery.
	\end{enumerate}
	
	If the neck parameter $\epsilon_0$ is chosen small enough, then (g1) tells us that the areas modified by surgery will have $G$ between $K^*/2$ and $2K^*$ after the surgery.
	(g1) also implies that $r(z)\leq (11/10)r^*$ on the first cross section.
	Property (g3) is a natural assumption as we wish the reduce the curvature by a certain amount each time we perform surgery. Whilst (g1) and g(3) together imply that the regions with largest curvature are not the ones affected by surgery but the ones that become disconnected from the surface and removed as they have known topology.
	Lastly, property (g3) tells us that surgeries are actually performed at a certain distance away from the ends of the neck, this will be useful in the next part of the lemma.
	
	\begin{lemma}\label{neckcor}
		Consider a flow with surgeries with our usual assumptions, and (g1)-(g3). Let $L,\theta>0$ be such that $\theta\leq d^\#$ and $L\geq 20$. Then there exist $\eta_0, G_0$ such that the following property holds. Let $(p_0,t_0)$ satisfy $(ND1)$ and $(ND2)$ of the neck detection lemma, and suppose in addition that the parabolic neighbourhood $\mathcal{\hat{P}}_G(p_0,t_0,L,\theta)$ is adjacent to a surgery region. Then $(p_0,t_0)$ lies at the centre of a hypersurface neck $N$ of length at least $L-3$, which is bordered on one side by a disc $D$. The value of $G$ on $N\cup D$ at time $t_0$ is less than $5K^*$, where $K^*$ is defined above in property (g1).
	\end{lemma}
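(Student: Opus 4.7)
The plan is to combine the hypersurface neck produced by Lemma \ref{7.9Huisken} with the structural assumptions (g1)--(g3) on surgery regions, and to use Theorem \ref{Ggradests} to transfer the curvature bound from the surgery cap onto the neck.

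\emph{Step 1: extract a hypersurface neck.} Since $(p_0,t_0)$ satisfies (ND1) and (ND2), Lemma \ref{7.9Huisken} produces a hypersurface neck $N'$ at time $t_0$ of length at least $L-2$, centred at $p_0$, contained in $B_{g(t_0)}(p_0,r_0 L)$ with $r_0=(n-1)(n-2)/(2G(p_0,t_0))$. Because $N'$ is $\epsilon$-close in $C^{k_0-1}$ to a standard cylinder, each cross-section has mean radius $r_0(1+O(\epsilon))$, and $G$ equals $G(p_0,t_0)(1+O(\epsilon))$ everywhere on $N'$.

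\emph{Step 2: identify the disc.} By hypothesis the parabolic neighbourhood is adjacent to a region modified by an earlier surgery, so one end of $N'$ abuts such a region. Property (g2) describes this region explicitly: on the neck side retained after surgery there is a short collar where the cross-sectional mean radius $r(z)$ varies within $[(1-1/11)r^*, 2r^*]$ before reaching unmodified neck, while on the other side the surface is a topological disc $D$ (the cap) whose cross-sections have mean radius comparable to $r^*$. Discarding at most one cross-section of $N'$ nearest the collar yields a neck $N\subset N'$ of length at least $L-3$, still centred at $(p_0,t_0)$ and now bordered on one side by $D$.

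\emph{Step 3: bound $G$ on $N\cup D$.} Property (g1), together with the choice of a sufficiently small surgery neck parameter $\epsilon_0$, gives $G\leq 2K^*$ throughout $D$. To control $G$ on $N$, match cross-sections at the interface with the collar: the last cross-section of $N'$ has mean radius $r_0(1+O(\epsilon))$ by Step 1, while the first cross-section of the surgery collar has mean radius in $[(1-1/11)r^*,(11/10)r^*]$ by (g2). Identifying these two cross-sections of the same surface forces $r_0$ to be a dimensional constant times $r^*$ up to $O(\epsilon)+O(\epsilon_0)$, hence $G(p_0,t_0)\leq C(n)K^*$. Choosing $\eta_0$ and $G_0$ so that the $\epsilon$ supplied by the neck detection lemma is small enough then yields $G\leq 5K^*$ on $N$ via Step 1, which combined with the bound on $D$ gives the claim. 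As a sanity check, this can also be verified directly by integrating $|\nabla G|\leq c^\# G^2$ from $D$ along $N$ using Theorem \ref{Ggradests}, exactly as in the proof of Lemma \ref{Ghash}.

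\emph{Main obstacle.} The delicate point is the radius-matching argument in Step 3: one must show that the geometric neck produced by Lemma \ref{7.9Huisken} and the normal neck left by surgery in (g2) agree, as parametrized cylindrical collars, over the region where they overlap, so that their mean radii at a common cross-section may legitimately be compared. This requires the uniqueness-type properties of hypersurface necks from Chapter 3 of \cite{huisken2009mean}, applied to the $C^{k_0-1}$-cylindrical structure supplied by Lemma \ref{7.9Huisken}. Once the parametrizations are identified, both the length count $L-3$ and the bound $G\leq 5K^*$ follow from the explicit radii $r_0$ and $r^*$ in the two descriptions.
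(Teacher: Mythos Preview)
Your argument conflates two distinct times. The hypothesis ``adjacent to a surgery region'' means there is a boundary point $q_0$ of the parabolic neighbourhood and a time $s_0\in[t_0-\theta r_0^2,\,t_0]$ at which $q_0$ lies in the closure of a region modified by surgery; the cap $D$ and the collar described in (g2) are features of the hypersurface at time $s_0$, not at $t_0$. Your Step~1 produces a neck $N'$ at time $t_0$, while in Step~2 you border it by the cap $D$ from time $s_0$, and in Step~3 you match the radius $r_0$ of $N'$ (measured at $t_0$) against the collar radii $r^*$ (measured at $s_0$). These comparisons are not a priori legitimate: the flow has evolved from $s_0$ to $t_0$, and the region $D$ may itself have been altered by further surgeries in that interval. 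In particular, the bound $G\le 2K^*$ on $D$ that you invoke from (g1) is only valid immediately after the surgery at $s_0$; nothing in your argument transfers it to $t_0$.

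The paper resolves this by first working entirely at time $s_0$: it uses the shrinking-neck structure from Lemma~\ref{7.9Huisken} to parametrize $B_0$ as a geometric neck \emph{at time $s_0$}, shows $B_0\subset N^*$ by comparing radii at that time, and thereby obtains a neck $N_0$ bordered by a disc $N'\cup D^*$ with $G\le 2K^*$ throughout, all at $s_0$. Only then does it propagate to $t_0$, and the key missing ingredient in your proof is property (g3): since $N_0$ separates $N'\cup D^*$ from the rest of the surface and $N_0\subset B_0$ is surgery-free on $[s_0,t_0]$, any surgery touching $N'\cup D^*$ would have to disconnect a region inside $N_0\cup N'\cup D^*$ containing a point with $G\ge 10K^*$; but the time gradient estimate forces $G\le 4K^*$ there up to the first such surgery, a contradiction. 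Without this use of (g3) you cannot rule out that the disc is destroyed or that its curvature blows up before $t_0$, so Steps~2--3 as written do not establish the conclusion.
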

	
	\begin{proof}
		Begin by applying part $(i)$ of the neck detection lemma to find $\eta_0$ and $H_0$ such that any point $(p_0,t_0)$ satisfying $(ND1)$ and $(ND2)$ lies at the centre of an $(\epsilon,k_0-1,L,\theta)$ shrinking curvature neck. By refining the choice of $\eta_0,G_0$ we can also obtain that for all times under consideration the neck can be parametrised as a geometric neck. Now let $(p_0,t_0)$ satisfy the assumptions of the lemma for such values of $\eta_0,G_0$. Let us also set
		\begin{align*}
		r_0=\frac{(n-1)(n-2)}{2G},\; B_0=\{p\in\mathcal{M}\;|\;d_{g(t_0)}(p,p_0)\leq r_0L\}
		\end{align*}
		Our assumptions are that $B_0$ is not modified by any surgery for $t\in[t_0-\theta r_0^2,t_0]$, but that there is a point $q_0\in\partial B_0$ and a time $s_0\in[t_0-\theta r_0^2,t_0]$ such that $q_0$ lies in the closure of a region modified by surgery at time $s_0$. Our aim is to now show that the structure is not affected by the other surgeries which may occur between time $s_0$ and $t_0$.
		
		Let us denote by $D^*$ the region modified by the surgery which includes $q_0$ in its closure, and let $N^*$ be the pat of the neck left unchanged with the properties described in (g2). Let us denote by $\Sigma^*_1$ and $\Sigma_2^*$ the two components of $\partial N^*$ having mean radius less than $(11/10)r^*$ and greater than $2r^*$ respectively. By (g2), $\Sigma_1^*\partial D^*$, and so $q_0\in\Sigma_1^*$. It follows that the mean radius of $\Sigma_1^*$ is equal to $\frac{(n-1)(n-2)}{2G(q_0,s_0)}$ up to an error of order $O(\epsilon)$. Then we know that $G(p,s_0)\geq\frac{(n-1)(n-2)}{2r^*}(10/11+O(\epsilon))=K^*(10/11+O(\epsilon))$ for all $p\in B_0$, because the fully non-linear curvature $G$ is constant up to $O(\epsilon)$ on $B_0$ at any fixed time.
		
		We claim that $B_0$ must be contained in $N^*$. In fact, we know that $B_0$ has not been changed by the surgery at time $s_0$, and so it has no common points with $D^*$. If $B_0$ were not contained in $N^*$, then it would intersect the other component $\Sigma_2^*$ of $\partial N^*$. But this is impossible, since at time $s_0$ the points in $B_0$ and in $\Sigma_2^*$ have a value of $G$ respectively greater than $(10/11)K^*$. and less than $K^*/2$ up to $O(\epsilon)$.
		
		Let $z\in[z_1,z_2]$ be the parameter describing the cross sections of $N^*$, where $z=z_i$ corresponds to $\Sigma_i^*$. Then we can find a maximal interval $[a,b]\subset[z_1,z_2]$ such that the neck corresponding to $z\in[a,b]$ is centred at $p_0$ and is contained in $B_0$. Let us denote by $N_0$ this neck. Arguing as in Lemma \ref{7.9Huisken}, we can see that $N_0$ has a length at least $L-2$.
		
		Let us now denote with $N'$ the part of $N^*$ corresponding to $z\in[z_1,a]$. Then we have that $p_0$ belongs to $N_0$, which is a normal $k_0$-hypersurface neck of length at least $L-2$, and which is bordered on one side by the region $N'\cup D^*$, which is diffeomorphic to a disc. This is the statement of our theorem, except it holds at time $s_0$ rather than the final $t_0$.
		
		It remains to show that, if there are any surgeries between time $s_0$ and $t_0$, they do not affect the region $N_0\cup N'\cup D^*$. Observe that in this region $G(p,s_0)\leq 2K^*$ for any $p$ in this region. By our choice of $D^\#, G^\#$ we have $G(p,t)\leq 4K^*$ for any $p\in N_0\cup N'\cup D^*$ and $t$ between $s_0$ and either $t_0$ or the first surgery time, if it exists, that affect this region. But this shows that there cannot be any such surgery. In fact, observe that $N_0$ is contained in $B_0$, which by assumption is not changed by surgeries in $[s_0,t_0]$. The neck $N_0$ disconnects the region  $D^*\cup N'$ from the rest of the manifold. By (g3), if a surgery changes this part, it must disconnect a region contained in $N_0\cup N'\cup D^*$  here the maximum of the curvature is at least $10K^*$. This contradicts the bound on the curvature we just found, which proves that the topology of the region does not change up to time $t_0$, and that the curvature remains below the value $5K^*$ in this region. To conclude the proof, it suffices to parametrise the geometric neck $N_0$ in normal form at the final time $t_0$, using the property that $N_0\subset B_0$ which is an $(\epsilon,k_0-1)$ curvature neck at any fixed time.
	\end{proof}
	
	In the neck detection lemma we assume that the the point under consideration $\frac{\lambda_1}{G}$ is small. The next result will allow us the deal with the case where it is not and can be reduced in some sense to the former one. It is a general property of hypersurfaces and not related to geometric flows, so the proof is exactly as in \cite{huisken2009mean}.
	
	\begin{theorem}\label{Glambdabig}
		Let $F:\mathcal{M}\to\mathbb{R}^{n+1}$, with $n>1$ be a smooth connected immersed hypersurface (not necessarily closed). Suppose that there exist $c^\#,G^\#>0$ such that $|\nabla G(p)|\leq c^\# G^2(p)$ for all $p\in\mathcal{M}$ such that $G(p)\geq G^\#$. Then, for any $\eta_0>0$ we can find $\alpha_0>0$ and $\gamma_0>1$ (depending only on $c^\#$ and $\eta_0$), such that the following holds. Let $p\in\mathcal{M}$ satisfy $\lambda_1(p)>\eta_0 G(p)$ and $G(p)\geq\gamma_0G^\#$. Then either $\mathcal{M}$ is closed with $\lambda_1>\eta_0G>0$ everywhere, or there exists a point $q\in\mathcal{M}$ such that
		\begin{enumerate}[(i)]
			\item $\lambda_1(q)\leq\eta_0G(q)$
			\item $d(p,q)\leq\frac{\alpha_0}{G(p)}$
			\item $G(q')\geq G(p)/\gamma_0$ for all $q'\in\mathcal{M}$ such that $d(p,q')\leq\alpha_0/G(p)$; in particular, $G(q)\geq G(p)/\gamma_0$.
		\end{enumerate}
	\end{theorem}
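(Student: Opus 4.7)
The plan follows \cite{huisken2009mean}; it does not use the flow at all, only the gradient estimate $|\nabla G|\leq c^\# G^2$ and general properties of strictly convex hypersurfaces.

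First I would fix $\gamma_0>1$ to be chosen and set $\alpha_0=(\gamma_0-1)/c^\#$. Lemma \ref{6.6HSforG} applied at $p$ yields
\[
G(q')\geq \frac{G(p)}{\gamma_0}\geq G^\# \quad\text{for every } q'\in B:=B_g\!\left(p,\tfrac{\alpha_0}{G(p)}\right),
\]
which already secures conclusion (iii) for any $q\in B$ we end up selecting.

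Next, consider the open set $U=\{\lambda_1>\eta_0 G\}$ and let $V$ be its connected component containing $p$. If $V=\mathcal{M}$, then $\lambda_1>\eta_0 G>0$ everywhere and the Sacksteder--Hadamard theorem on complete strictly convex immersed hypersurfaces implies that $\mathcal{M}$ is a closed embedded convex hypersurface; this is the first alternative of the statement. Otherwise $\partial V$ is nonempty in $\mathcal{M}$ and, by continuity of $\lambda_1/G$, any $q\in\partial V$ satisfies $\lambda_1(q)=\eta_0 G(q)$, which gives conclusion (i). It remains to locate such a $q$ inside $B$, so that (ii) also holds.

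On $V$, the lower bound $\lambda_1\geq\eta_0 G\geq\eta_0 G(p)/\gamma_0$ combined with Bonnet--Myers applied to the intrinsic Ricci estimate $\mathrm{Ric}\geq(n-1)\lambda_1^2$ (equivalently, with Sacksteder's extrinsic diameter bound) yields an intrinsic diameter bound for $V$ of the form $C\gamma_0/(\eta_0 G(p))$. Taking $\gamma_0$ large enough, depending only on $c^\#$ and $\eta_0$, so that $(\gamma_0-1)/c^\#$ strictly exceeds $C\gamma_0/\eta_0$, the ball $B$ properly contains $V$; hence $\partial V\subset B$ and any boundary point of $V$ is a valid choice of $q$.

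The main obstacle is the consistency between the two competing length scales: the radius of $B$ grows like $\gamma_0/c^\#$ while the convex diameter bound grows like $\gamma_0/\eta_0$, so the one-shot choice of constants succeeds provided $\eta_0$ is not too small relative to $c^\#$. For the general case one iterates Lemma \ref{6.6HSforG} along a geodesic emanating from $p$, extending the region of controlled $G$ step by step until either a boundary point of $V$ is detected inside the expanding neighbourhood or the component $V$ exhausts $\mathcal{M}$. This bookkeeping is identical to \cite{huisken2009mean} and, since it depends only on the gradient estimate for $G$ and pure convex geometry, it carries over verbatim to the $G$-flow setting.
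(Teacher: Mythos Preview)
Your proposal takes a different route from the paper and contains a genuine gap in the small--$\eta_0$ regime. The paper (following \cite{huisken2009mean}) does \emph{not} use Bonnet--Myers or Sacksteder. Assuming $\lambda_1>\eta_0 G$ on the whole ball $\mathcal{M}_{p,\alpha_0}$, it instead proves the Gauss map $\nu:\mathcal{M}_{p,\alpha_0}\to S^n$ is surjective: for each $\omega\in S^n$ one follows the integral curve of $\omega^T/|\omega^T|$ and obtains the \emph{first--order} inequality
\[
\frac{d}{ds}\arcsin\langle\nu(\gamma(s)),\omega\rangle\;\ge\;\eta_0\,G(\gamma(s))\;\ge\;\frac{\eta_0\,G(p)}{1+c^\# s\,G(p)},
\]
whose integral over $[0,\alpha_0/G(p)]$ equals $\tfrac{\eta_0}{c^\#}\ln(1+c^\#\alpha_0)$ and exceeds $\pi$ once $\alpha_0>\tfrac{1}{c^\#}\bigl(e^{c^\#\pi/\eta_0}-1\bigr)$. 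Since the Gauss map is also a local diffeomorphism on the convex region, it is a global diffeomorphism onto $S^n$, forcing $\mathcal{M}=\mathcal{M}_{p,\alpha_0}$ to be closed. This works for \emph{every} $\eta_0>0$.

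Your Bonnet--Myers argument is second--order (Jacobi fields/conjugate points) and this is precisely why it breaks. Even with the sharp bound $G(\gamma(s))\ge G(p)/(1+c^\# s G(p))$ along a geodesic, the sectional curvature is only bounded below by $\eta_0^2 G(p)^2/(1+c^\# s G(p))^2$; after the substitution $u=1+c^\# s G(p)$ the comparison equation becomes the Euler equation $u^2\ddot y+(\eta_0/c^\#)^2 y=0$, which is non--oscillatory whenever $\eta_0\le c^\#/2$. Thus for small $\eta_0$ there is no conjugate point on any finite interval, and no diameter bound can be extracted---no amount of ``iterating Lemma~\ref{6.6HSforG}'' repairs this, since the obstruction is the order of the differential inequality, not the size of the neighbourhood. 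In particular, your assertion that ``this bookkeeping is identical to \cite{huisken2009mean}'' is incorrect: Huisken--Sinestrari use exactly the first--order Gauss--map computation above, not a Ricci/diameter argument. A secondary problem: in the case $V=\mathcal{M}$ you appeal to Sacksteder--Hadamard, but $\mathcal{M}$ is not assumed complete, and even for complete strictly convex hypersurfaces that theorem yields embeddedness, not compactness (a paraboloid is complete and strictly convex). The paper sidesteps this because the Gauss--map argument already shows the ball itself is diffeomorphic to $S^n$.
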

	
	\begin{proof}
		Given $\alpha_0>0$, set $\gamma_0=1+c^\#\alpha_0$. For a given $p\in\mathcal{M}$, let us set $\mathcal{M}_{p,\alpha_0}=\{q\in\mathcal{M}| d(p,q)\leq\alpha_0/G(p)\}$. By \ref{6.6HSforG}, we obtain that, if $G(p)\geq\gamma_0G^\#$, then
		\begin{align*}
		G(q)\geq\frac{G(p)}{1+c^\#d(p,q)G(p)}\geq\frac{G(p)}{\gamma_0}
		\end{align*}
		for all $q\in\mathcal{M}_{p,\alpha_0}$.
		
		To prove the theorem suppose now that $p\in\mathcal{M}$ is such that $G(p)\geq\gamma_0G^\#$ and that $\lambda_1(q)>\eta_0G(q)$ for all $q\in\mathcal{M}_{p,\alpha_0}$. We claim that is $\alpha_0$ is suitably large, these properties imply that $\mathcal{M}$ coincides with $\mathcal{M}_{p,\alpha_0}$ and is therefore compact with $\lambda_1>\eta_0 G$ everywhere. 
		
		To prove this, we show that the Gauss map $\nu:\mathcal{M}_{p,\alpha_0}\to S^n$ is surjective. Take any $\omega\in S^n$, such that $\omega\neq \pm\nu(p)$. We consider the curve $\gamma$ solution of the ODE, $\dot{\gamma}=\frac{\omega^T(\gamma)}{|\omega^T(\gamma)|}$ with $\gamma(0)=p$, where for any $q\in\mathcal{M}, \omega^T(q)=\omega-\langle\omega,\nu(p)\rangle\nu(q)$ is the component of $\omega$ tangential to $\mathcal{M}$ at $q$. Since $\dot{\gamma}|=1$, the curve $\gamma$ will be parametrized by arc length The curve can be continued until $|\omega^T(\gamma)|\neq0$, i.e. $\nu(\gamma)\pm\omega$. As long as $\gamma(s)$ is contained in $\mathcal{M}_{p,\alpha_0}$, which is the case if $s\in[0,\alpha_0/G(p)]$, we can use the property $\lambda_1>\eta_0G$ to derive some estimate.
		
		Namely, if we take an orthonormal basis $e_1,\dots ,e_n$ of the tangent space to $\mathcal{M}$ at point $\gamma(s)$, we have
		\begin{align*}
		\frac{d}{ds}\langle\nu,\omega\rangle&=\sum_{i=1}^n\langle\dot{\gamma},e_i\rangle\langle\nabla_{e_i}\nu,\omega\rangle=\frac{1}{\omega^T}h_{ij}\langle\omega,e_i\rangle\langle\omega,e_j\rangle\\
		&\geq\frac{1}{\omega^T}\eta_0H|\omega^T|^2=\eta_0G\sqrt{(1-\langle\nu,\omega\rangle^2)},
		\end{align*}
		which implies $\frac{d}{ds}arcsin\langle\nu,\omega\rangle\geq\eta_0 G$.
		
		Now suppose that $\gamma(s)$ exists for $s\in[0,\alpha_0/G(p)]$. Then we have
		\begin{align*}
		\pi&>\arcsin\langle\nu(\gamma(\frac{\alpha_0}{G(p)})),\omega\rangle-\arcsin\langle\nu(p),\omega\rangle\geq\eta_0\int_0^{\alpha_o/G(p)}G(\gamma(s))ds\\
		&\geq\eta_0\int_0^{\alpha_0/G(p)}\frac{1}{G(p)^{-1}+c^\#s}=\frac{\eta_0}{c^\#}\ln(1+c^\#\alpha_0).
		\end{align*}
		
		Thus, if $\alpha_0$ is large enough to have
		\begin{center}
			$\alpha_0>\frac{1}{c^\#}(\exp(\frac{c^\#\pi}{\eta_0})-1)$
		\end{center}
		we obtain a contradiction. Therefore there exists $s^*\in(0,\alpha_0/G(p)]$ such that either $\nu(\gamma(s))\cdot\omega\to1$ or $\nu(\gamma(s))\cdot\omega\to-1$ as $s\to s^*$. Since $\arcsin\langle\nu,\omega/rangle$ is increasing, only the first possibility can occur. This shows that $\gamma(s)$ converges, as $s\to s*$, to some point $q^*\in\mathcal{M}_{p,\alpha_0}$ such that $\ni(q^*)=\omega$, as desired.
		
		It remains to consider the case when $\omega=\pm\nu(p)$, when $\omega$ trivially belongs to the image of the Gauss map. If instead we have $\omega=-\nu(p)$, it suffices to replace $p$ with another point $p'$ sufficiently close to $p$; by convexity, we have $\nu(p')\neq\nu(p)=-\omega$ and the previous argument can be applied. Thus, we have proved that the Gauss map is surjective from $\mathcal{M}_{p,\alpha_0}$ to $S^n$. Since $\lambda_1>0$ on $\mathcal{M}_{p,\alpha_0}$, the Gauss map is also a local diffeomorphism. Then, since $S^n$ is simply connected for $n>1$, it follows that the map is a global diffeomorphism.
	\end{proof}
	
	Putting all this together, we are able to provide a result about the existence of necks before a first singular time is approached.
	
	\begin{corollary}
		Let $\mathcal{M}_t$ be a smooth G-flow of closed $2$-convex hypersurfaces. Given neck parameters $\epsilon,k,L$ there exists $G^*$ (depending on $\mathcal{M}_0$) such that, if $G_{max}(t_0)\geq G^*$, then the hypersurface at time $t_0$ either contains an $(\epsilon,k,L)$-hypersurface neck or it is convex.
	\end{corollary}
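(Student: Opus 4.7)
The plan is to pick a point $p_0$ where $G$ attains its maximum on $\mathcal{M}_{t_0}$ and split according to the size of $\lambda_1/G$ at $p_0$: the small-ratio case reduces directly to the Neck Detection Lemma and its hypersurface-neck Corollary, while the large-ratio case is handled by Theorem \ref{Glambdabig}, which either yields convexity of the whole hypersurface or transfers us to a nearby point where the ratio is small.

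Concretely, I would first fix the neck parameters $(\epsilon, k, L)$, assume without loss of generality that $L \geq 10$, pick any $\theta \in (0, d^\#]$, and apply the Corollary of the Neck Detection Lemma with enlarged length parameter $L+2$ to extract constants $\eta_0$ and $G_0$. Then I would apply Theorem \ref{Glambdabig} with this same $\eta_0$ to obtain $\alpha_0$ and $\gamma_0 > 1$, and finally set $G^* := \max\{\gamma_0 G_0,\, \gamma_0 G^\#\}$, where $G^\#$ is the constant from Theorem \ref{Ggradests}. Since $\mathcal{M}_t$ is a smooth $G$-flow without surgeries, hypothesis (ND2) of the Neck Detection Lemma is automatic at every point.

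Now suppose $G_{\max}(t_0) \geq G^*$ and let $p_0$ be a maximiser of $G$ at time $t_0$. If $\lambda_1(p_0,t_0)/G(p_0,t_0) \leq \eta_0$, then $(p_0,t_0)$ satisfies (ND1) and (ND2) with parameters $(L+2,\theta)$, and the Corollary to Lemma \ref{neckdetectionG} immediately places $(p_0,t_0)$ at the centre of an $(\epsilon, k, (L+2)-2) = (\epsilon, k, L)$-hypersurface neck. Otherwise $\lambda_1(p_0,t_0) > \eta_0 G(p_0,t_0)$, so Theorem \ref{Glambdabig} applies: either $\mathcal{M}_{t_0}$ is closed with $\lambda_1 > \eta_0 G > 0$ everywhere, which is exactly the convex alternative, or else there exists a point $q \in \mathcal{M}_{t_0}$ with $\lambda_1(q)/G(q) \leq \eta_0$ and $G(q) \geq G(p_0)/\gamma_0 \geq G_0$, and we apply the already-handled first case at $q$ to produce the desired hypersurface neck.

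The main (and essentially only) obstacle is the careful ordering of the constants: one must extract $(\eta_0, G_0)$ from the Neck Detection machinery \emph{before} applying Theorem \ref{Glambdabig}, so that the point $q$ produced by the latter automatically lands in the curvature regime of the former, and one must pad the length by $2$ at the outset to absorb the loss in passing from a shrinking curvature neck to a hypersurface neck. All required inputs---the absence of surgeries from smoothness of the flow, the Neck Detection Lemma together with its Corollary, and the trichotomy of Theorem \ref{Glambdabig}---have already been established in the preceding sections, so no new analytic work is needed beyond this bookkeeping.
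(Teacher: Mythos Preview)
Your argument is correct and is essentially the approach the paper has in mind: split on the size of $\lambda_1/G$ at a point of maximal curvature, invoke the neck detection machinery in the small-ratio case, and use Theorem~\ref{Glambdabig} in the large-ratio case to either conclude convexity or reduce to the small-ratio case at a nearby point; the smoothness of the flow makes (ND2) automatic, exactly as the paper remarks. The only discrepancy is that the paper's one-line proof cites Lemma~\ref{neckcor} (the result about neighbourhoods adjacent to a surgery region), whereas you---more appropriately for a smooth flow with no surgeries present---invoke the Corollary of Lemma~\ref{neckdetectionG} that directly produces an $(\epsilon,k,L-2)$-hypersurface neck, padding $L$ by $2$ to compensate; your choice is the cleaner and more accurate reference here, and the bookkeeping with the constants $(\eta_0,G_0,\gamma_0)$ is handled carefully.
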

	
	\begin{proof}
	  Combine Lemma (\ref{neckcor}) with Theorem (\ref{Glambdabig}). Since we assume the flow is smooth, the parabolic neighbourhood in hypothesis $(ND2)$ trivially does not contain surgeries.
	\end{proof}
	
	Therefore, unless the surface becomes convex, we are able to perform surgery before the singular time. Section $8$ of \cite{huisken2009mean} then describes the process by which after performing surgery, the maximum of the curvature will decrease making our surface less singular, it also goes on to proving the existence of necks after the first surgery, as we have to check that condition $(ND2)$ is still satisfied.
	
	Now that we've obtained all our results regarding neck detection. One can then continue as in \cite{huisken2009mean} to prove the following theorem.
	
	\begin{theorem}
		Let $ \mathcal{M}_0$ be a smooth closed two-convex hypersurface immersed in $\mathbb{R}^{n+1}$, with $n\geq 3$. Then there exist constants $G_1<G_2<G_3$ and a G-flow with surgeries starting from $\ \mathcal{M}_0$ with the following properties:
		\begin{itemize}
			\item Each surgery takes place at a time $T_i$ such that $\max G(\cdot,T_i-)=G_3$.
			\item After the surgery, all the components of the manifold (except for ones diffeomorphic to spheres of to $S^{n+1}\times S^1$ which are neglected afterwards) satisfy $\max G(\cdot,T_i+)\leq G_2$.
			\item Each surgery starts from a cross section of a normal hypersurface neck with mean radius $r(z_o)=\frac{(n-1)(n-2)}{2G}$.
			\item The flow with surgeries terminates after finitely many steps.
		\end{itemize}
		The constants $G_i$ can be any values such that $G_1\geq\omega_1$,$G_2=\omega G_1$ and $G_3=\omega_3G_2$, with $\omega_i>1$.
	\end{theorem}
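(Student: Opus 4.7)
The plan is to construct the flow with surgeries inductively, following the algorithm of Huisken--Sinestrari (Section 8 of \cite{huisken2009mean}) adapted to the $G$-flow via the estimates and neck detection results developed in the previous sections. First I would fix neck parameters $(\epsilon,k,L)$ small/large enough that Lemma \ref{neckcor} and Corollary \ref{Glambdabig} simultaneously apply, and then choose $G_1$ larger than all the threshold constants $G^{\#}$, $G_0$, $G^{*}$, $\gamma_0 G^{\#}$ appearing in Theorem \ref{Ggradests}, Lemma \ref{neckdetectionG}, Lemma \ref{neckcor} and Theorem \ref{Glambdabig}. Given ratios $\omega_2,\omega_3>1$, set $G_2=\omega_2 G_1$ and $G_3=\omega_3 G_2$ so that the gap between the post-surgery scale $G_2$ and the trigger scale $G_3$ is large enough to absorb the $O(\epsilon)$ errors produced by the neck parametrisation. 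The surgery scale $K^{*}$ in properties (g1)--(g3) will be taken of order $G_2$, so that each surgery cross section is a normal hypersurface neck with mean radius $r^{*}=(n-1)(n-2)/(2K^{*})$, as required by the third bullet.

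Next I would run the smooth $G$-flow from $\mathcal{M}_0$ until the first time $T_1$ with $\max G(\cdot,T_1-)=G_3$. At this time the preceding Corollary (the one following Theorem \ref{Glambdabig}) guarantees that either $\mathcal{M}_{T_1-}$ is convex --- in which case it belongs to a component diffeomorphic to $S^n$ that we discard --- or it contains an $(\epsilon,k,L)$-hypersurface neck. In the latter case, at every point where $G\ge 2K^{*}$ one considers two sub-cases: if $\lambda_1/G\le\eta_0$ the point lies on a neck by Lemma \ref{neckdetectionG} (hypothesis (ND2) being trivial for the smooth flow), and if $\lambda_1/G>\eta_0$ Theorem \ref{Glambdabig} either provides a nearby point where the pinching ratio is small (reducing to the previous case) or classifies the whole component as uniformly convex. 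Standard topological arguments from \cite{huisken2009mean} then let one select a disjoint collection of neck cross-sections on which to perform surgery at scale $K^{*}$; the components that become disconnected and contain only regions of controlled geometry are diffeomorphic to $S^n$ or $S^{n-1}\times S^1$ and are discarded, while the surviving components obey property (g2). Using the cylindrical estimate Theorem \ref{cylindricalG} together with the gradient estimate Theorem \ref{Ggradests} one then verifies that on the remaining pieces one has $\max G(\cdot,T_1+)\le G_2$, which is the second bullet.

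For the inductive step I would continue the smooth flow from $T_1+$ and iterate. The delicate point is that after the first surgery hypothesis (ND2) of Lemma \ref{neckdetectionG} is no longer automatic, so each subsequent high-curvature point must be analysed using the lemma sitting between Lemma \ref{7.9Huisken} and Lemma \ref{neckcor} (the one that replaces (ND2) by a comparison with the surgery scale $K^{*}$); this is precisely where the choice $G_3\gg 5K^{*}\sim 5G_2$ is used, and where the adjusted gradient estimate plays its crucial role via Lemma \ref{Ghash}. This is the main obstacle in the argument --- ensuring that the constants $G_1,G_2,G_3$ are chosen once and for all so that the neck detection machinery continues to apply uniformly across all surgery times $T_i$, despite the presence of previously modified regions in the backward parabolic neighbourhoods. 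Properties (g1)--(g3) are preserved by construction because each surgery is performed at the prescribed neck scale and removes a region containing a point with $G\ge G_3\ge 10K^{*}$.

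Finally, for finite termination I would use part (vii) of Lemma \ref{lemmaevol}, namely $\tfrac{d}{dt}d\mu\le -G^{2}/C$: the volume $|\mathcal{M}_t|$ is monotonically decreasing along the smooth flow and is also decreased by each surgery. Each surgery step removes at least one component containing a point with $G\ge G_3$, and by the gradient estimate Theorem \ref{Ggradests} such a component contains a ball of radius comparable to $1/G_3$ and hence of volume bounded below by a fixed positive constant $v_0=v_0(n,G_3)$. Since the total volume is bounded above by $|\mathcal{M}_0|$, there can be at most $|\mathcal{M}_0|/v_0$ surgeries, proving the fourth bullet. The surviving components at the final time are either convex (hence $S^n$) or get discarded as $S^{n-1}\times S^1$ pieces, completing the proof.
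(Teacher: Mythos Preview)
Your proposal is correct and follows exactly the route the paper indicates: the paper's own ``proof'' consists of a single sentence referring the reader to Section~8 of \cite{huisken2009mean}, and you have accurately spelled out that algorithm (choice of thresholds $G_1<G_2<G_3$, the neck--versus--convex dichotomy via Lemma~\ref{neckdetectionG} and Theorem~\ref{Glambdabig}, the use of the unnumbered lemma between Lemma~\ref{7.9Huisken} and Lemma~\ref{neckcor} to replace (ND2) after the first surgery, and the finite-termination argument via area decrease). Your sketch in fact contains more detail than the paper provides.
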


\nocite{*}
\bibliographystyle{plain}
\bibliography{references}

\end{document}